\newcommand{\overbar}[1]{\mkern 1.5mu\overline{\mkern-1.5mu#1\mkern-1.5mu}\mkern 1.5mu}
\newcommand{\sseq}{\subseteq}
\newcommand{\IFF}{\Longleftrightarrow}
\newcommand{\ith}{i^{\text{th}}}
\newcommand{\mdeg}{\mathrm{mdeg}}
\newcommand{\lcm}{\mathrm{lcm}}
\newcommand{\D}{\Delta}
\newcommand{\pd}{\mathrm{pd}}
\newcommand{\U}{\mathcal{U}}
\newcommand{\T}{\mathcal{T}}
\newcommand{\K}{\mathcal{K}}
\theoremstyle{plain}
\newtheorem{theorem}{Theorem}
\newtheorem{lemma}[theorem]{Lemma}
\newtheorem{proposition}[theorem]{Proposition}
\newtheorem{corollary}[theorem]{Corollary}
\theoremstyle{definition}
\newtheorem{definition}[theorem]{Definition}
\newtheorem{example}[theorem]{Example}
\newtheorem{remark}[theorem]{Remark}
\author{Sara Faridi\thanks{Department of Mathematics and Statistics,
    Dalhousie University, Halifax, NS, Canada, faridi@mathstat.dal.ca.
    Research supported by NSERC.}\ \  and \ Ben Hersey\thanks{Department of
    Mathematics and Statistics, Queen's University, Kingston, ON,
    Canada, bsh1@queensu.ca}}
\date{}
\title{\Large \sc Resolutions of Monomial Ideals of
  Projective Dimension 1}
\begin{document}
\maketitle

\begin{abstract} We show that a monomial ideal $I$ has projective dimension $\leq$ 1 if and only if the minimal free resolution of $S/I$ is supported on a graph that is a tree. This is done by constructing specific graphs which support the resolution of the $S/I$. We also provide a new characterization of quasi-trees, which we use to give a new proof to a result by Herzog, Hibi, and Zheng which characterizes monomial ideals of projective dimension 1 in terms of quasi-trees.
\end{abstract}



\section{Introduction}

A free resolution of an ideal $I \subset k[x_1,...,x_n]$ is a long exact sequence of free modules that represents the relations between the generators of that ideal. If $I$ is generated by monomials it is always possible to find a simplicial complex whose simplicial chain complex determines a free resolution of the $S/I$ (e.g. the Taylor complex, see \cite{Bayer-MonRes}, \cite{Taylor-PhD}). However, it is known that the \emph{minimal} free resolution may not be determined by a simplicial complex; see Velasco~\cite{Velasco-MinResNotCW} and Reiner and Welker~\cite{Reiner-LinSyz}. In other words, the minimal resolution of $S/I$ need not be supported on a simplicial complex. A natural question is which monomial ideals do have such minimal simplicial resolutions?

In this paper we consider squarefree monomial ideals of projective dimension 1. The restriction to squarefree monomial ideals can be made without loss of generality since the polarization, $I_{\mathrm{pol}}$, of $I$ is a squarefree monomial idea for any monomial ideal $I$. Moreover, Peeva and Velasco \cite{Peeva-FramesDegen} show that the minimal free resolution of both $S/I$ and the minimal free resolution of $S_{\mathrm{pol}}/I_{\mathrm{pol}}$ are homogenizations of the same frame. In particular, the minimal free resolution of $S/I$ is supported on a simplicial complex if and only if $S_{\mathrm{pol}}/I_{\mathrm{pol}}$ is supported on the same simplicial complex. 

We show that if $I$ is a squarefree monomial ideal of projective dimension 1, then we can give a complete combinatorial characterization of both $I$ and the minimal free resolution of $S/I$. Our treatment of of these ideals and their resolutions allow us to prove the following theorem, in which we let $\mathcal{N}(I^\vee)$ denote the Alexander Dual of the Stanley-Reisner complex of $I$.

 
\begin{theorem}[Theorem~\ref{Cor to HHZ}] Let $I$ be a squarefree monomial ideal in a polynomial ring $S$. Then the following statements are equivalent.
  \begin{enumerate}  
  \item $\pd_S(I) \leq 1$
    \item $\mathcal{N}(I^\vee)$ is a quasi-forest
    \item $S/I$ has a minimal free resolution supported on a graph-tree.
  \end{enumerate}
\end{theorem}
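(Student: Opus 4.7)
The plan is to organize the proof around the implications $(1) \iff (2)$ and $(1) \Rightarrow (3) \Rightarrow (1)$, making essential use of tools developed earlier in the paper.

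For $(1) \iff (2)$, this equivalence is exactly the content of the Herzog--Hibi--Zheng theorem characterizing squarefree monomial ideals of projective dimension $1$ via quasi-forests. The plan is not to reprove it from scratch, but to appeal to the new characterization of quasi-trees established earlier in this paper, applied to the Stanley--Reisner complex $\mathcal{N}(I^\vee)$. That characterization should repackage the combinatorial content of $\pd_S(I)\leq 1$ into a form from which the quasi-forest condition falls out directly, giving a short new proof of the HHZ direction.

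For $(1) \Rightarrow (3)$, assume $\pd_S(I) \leq 1$, so the minimal free resolution of $S/I$ has the form $0 \to F_2 \to F_1 \to F_0 \to S/I \to 0$. The plan is to construct an explicit graph $G$ whose vertices correspond to a basis of $F_1$ (the generators of $I$) and whose edges correspond to a basis of $F_2$ (the first syzygies), with each edge labeled by the lcm of its endpoint monomials. Using the quasi-forest structure provided by $(2)$ and its leaf order, I would attach edges one at a time in a manner that keeps the graph acyclic at every stage, so that the end result is a forest. Homogenizing the simplicial chain complex of $G$ and checking that its ranks and multidegrees match those forced by the resolution, one verifies that $G$ supports the minimal resolution of $S/I$.

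For $(3) \Rightarrow (1)$, if the minimal free resolution of $S/I$ is supported on a graph-tree $G$, then because $\dim G \leq 1$ the homogenized chain complex of $G$ is concentrated in homological degrees $0$, $1$, and $2$. Since this complex is the minimal free resolution of $S/I$, we get $\pd_S(S/I) \leq 2$, i.e.\ $\pd_S(I) \leq 1$.

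The main obstacle is the construction in $(1) \Rightarrow (3)$: one needs a combinatorial description of the multidegrees of the second syzygies in terms of the quasi-forest structure that is explicit enough to define the edges of $G$ unambiguously, and then one must show both that these edges produce no cycle and that the resulting complex is exact. The other implications, including the translation back to HHZ, should follow routinely once this construction is in hand.
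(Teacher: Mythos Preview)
Your outline gets the logical skeleton almost right, but the way you route the implications hides a circularity and misses the paper's key direct argument.

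Your $(3)\Rightarrow(1)$ is fine and matches the paper. Your ``$(1)\Rightarrow(3)$'' is really $(2)\Rightarrow(3)$: you assume the quasi-forest structure and build the tree from a leaf order. That is exactly what the paper does in Theorem~\ref{Quasi forest then resolution supported on a graph tree}, so this part is on target, though you should label it as $(2)\Rightarrow(3)$.

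The gap is in $(1)\Rightarrow(2)$. You say you will invoke the new characterization of quasi-forests (every induced subcomplex has a leaf) and that $\pd_S(I)\le 1$ will ``repackage'' into that condition. But the paper's proof of this direction (Theorem~\ref{HHZ}) is not self-contained from the characterization: for each $W$ it first shows $\pd\,\mathcal{N}((\Delta_W)^\vee)\le 1$ via a regular-sequence argument, then applies the \emph{direct} implication $(1)\Rightarrow(3)$ (Theorem~\ref{pd is one iff graph tree}) to get a tree supporting the resolution of $\mathcal{N}((\Delta_W)^\vee)$, and finally reads a leaf of $\Delta_W$ off a free vertex of that tree. In other words, the paper's route is $(1)\Rightarrow(3)$ first, and $(1)\Rightarrow(2)$ is deduced \emph{from} $(3)$. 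Your plan goes $(1)\Rightarrow(2)\Rightarrow(3)$, so unless you import the original Hilbert--Burch proof of Herzog--Hibi--Zheng for $(1)\Rightarrow(2)$, your argument is circular.

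The missing ingredient is therefore the paper's direct proof of $(1)\Rightarrow(3)$ in Theorem~\ref{pd is one iff graph tree}: take the minimal resolution $0\to S^t\to S^r\to S$, observe that $\ker\phi$ has a minimal generating set of binomial Taylor syzygies, dehomogenize by setting all $x_i=1$ to obtain an exact sequence of $k$-vector spaces whose second map has exactly one $+1$ and one $-1$ per column, and recognize this as the chain complex of an acyclic graph, hence a tree. This step is what makes the whole cycle close without appealing to Hilbert--Burch, and it does not use $(2)$ at all.
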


The equivalence of (1) and (2) was established by Herzog, Hibi, and Zheng \cite{Herzog-Dirac} by applying the Hilbert-Burch theorem; it is a characterization of the minimal free resolution of $R/I$, when $R$ is a local ring (in our case a graded local ring) and $\mathrm{pd}_R(R/I) = 2$ (i.e. $\mathrm{pd}_R(I) = 1$), by the determinants of minors of the differential. The approach taken in this paper is to first establish the equivalence of (1) and (3) for a general monomial ideal $I$, and then show that when $I$ is additionally squarefree (2) holds if and only if (3) holds as well. Moreover, when $\mathcal{N}(I^\vee)$ is a quasi-forest we give an algorithm for constructing a labeled tree which supports the minimal free resolution of $I$.

This paper is split into two parts. The first will contain the necessary definitions and preliminary results that we need, with the only new result being a characterization of quasi-forests in terms of their induced subcomplexes. The second part is where we restrict to monomial ideals of small projective dimension and establish the equivalences in Theorem \ref{Cor to HHZ}.

\section{Preliminaries: Quasi-Forests and Simplicial Resolutions}

\begin{definition}  Let $V= \{v_1,...,v_n\}$ be a finite set. A (finite) \textbf{simplicial complex}, $\Delta$, on $V$ is a collection of non-empty subsets of $V$ such that $F \in \Delta$ whenever $F \subseteq G$ for some $G \in \Delta$. The elements of $\Delta$ are called \textbf{faces}. Faces containing one element are called \textbf{vertices} and the set $V(\Delta) = \{ v_i \ | \ \{v_i\} \in \Delta \}$ is called the \textbf{vertex set} of $\Delta$. The maximal faces of $\Delta$ are called \textbf{facets}. For each face $F \in \Delta$, we define $\dim(F) = |F|-1$ to be the \textbf{dimension} of $F$. We define $\dim(\Delta) = \max\{\dim(F) : F \in \Delta\}$ to be the dimension of the simplicial complex $\Delta$. If $\Delta$ is a simplicial complex with only 1 facet and $r$ vertices, we call $\Delta$ an \textbf{r-simplex}.
\end{definition}
  
\begin{definition} If $W \sseq V$, we define the \textbf{induced subcomplex} of $\Delta$ on $W$, denoted $\Delta_W$, to be the simplicial complex on $W$ given by $\Delta_W = \{F \in \Delta | F \sseq W\}$. A \textbf{subcollection} of $\Delta$ is a simplicial complex whose facets are also facets of $\Delta$. We say $\Delta$ is \textbf{connected} if for every $v_i,v_j \in V$ there is a sequence of faces $F_0,...,F_k$ such that $v_i \in F_0$, $v_j \in F_k$ and $F_i \cap F_{i+1} \not = \emptyset$ for $i = 0,...,k-1$.
\end{definition}
  
It is easy to see from the definition that a simplicial complex can be described completely by its facets, since every face is a subset of a facet and every subset of every facet is in a simplicial complex. So, if $\Delta$ has  facets $F_0,...,F_q$, we use the notation $\langle F_0,...,F_q \rangle$ to describe $\Delta$.

The \textbf{$f$-vector} of a $d$-dimensional simplicial complex $\Delta$ is the sequence $f(\Delta) = (f_0,...,f_d)$, where  each $f_i$ is the number of $i$-dimensional faces of $\Delta$.

\begin{definition}[Faridi~\cite{Faridi-FacetIdeal}] A facet $F$ of a simplicial complex $\Delta$ is called a \textbf{leaf} if either $F$ is the only facet of $\Delta$ or for some facet $G \in \Delta$ with $G\neq F$ we have that $F \cap H \sseq G$ for all facets $H\neq F$ of $\Delta$. The facet $G$ is said to be the \textbf{joint} of $F$. A simplicial complex $\Delta$ is a \textbf{simplicial forest} if every nonempty subcollection of $\Delta$ has a leaf. A connected simplicial forest is called a  \textbf{simplicial tree}.

\end{definition}

If a facet $F$ of a simplicial complex is a leaf, then $F$ necessarily has a \textbf{free vertex}, which is a vertex of $\Delta$ that belongs to exactly one facet.

One of the properties of simplicial trees that we will make particular use of is that whenever $\Delta$ is a simplicial tree we can always order the facets $F_1,...,F_q$ of $\Delta$ so that $F_i$ is a leaf of the induced subcollection $\langle F_1,...F_i \rangle$. Such an ordering on the facets is called a \textbf{leaf order} and it is used to make the following definition.

\begin{definition}(Zheng~\cite{Zheng-ResFac}) \label{quasi-tree} A simplicial complex $\Delta$ is a \textbf{quasi-forest} if $\Delta$ has a leaf order. A connected quasi-forest is called a {\bf quasi-tree}.
\end{definition}

Equivalently, we could have defined quasi-trees to be simplicial complexes such that every induced subcomplex has a leaf. This is not clear from the definition, so we give a proof.

\begin{proposition}[A characterization of quasi-forests]\label{prop 2} A simplicial complex $\Delta$ with vertex set $V$ is a quasi-forest if and only if for every subset $W \subset V$, the induced subcomplex $\Delta_W$ has a leaf.
\end{proposition}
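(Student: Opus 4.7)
The plan is to prove both implications by induction on $q$, the number of facets of $\Delta$, with the case $q=1$ immediate in both directions.

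For the backward direction, I would apply the hypothesis with $W = V$ to extract a leaf $F$ of $\Delta$ itself; reindex so that $F = F_q$ with joint $F_k$, and consider the subcollection $\Delta' = \langle F_1, \ldots, F_{q-1}\rangle$. The key step is to show that $\Delta'$ also satisfies the hypothesis, for which it suffices to prove that $\Delta_{W'} = \Delta'_{W'}$ for every $W' \subseteq V(\Delta')$. This follows from the leaf condition on $F_q$: any face of $\Delta_{W'}$ contained in $F_q$ uses only vertices of $V(\Delta') \cap F_q = \bigcup_{i<q}(F_q \cap F_i) \subseteq F_k$, and so is already a face of $\Delta'$. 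The inductive hypothesis then provides a leaf order of $\Delta'$, and appending $F_q$ produces a leaf order of $\Delta$.

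For the forward direction, fix a leaf order $F_1, \ldots, F_q$ of $\Delta$ with leaf $F_q$ and joint $F_k$, let $W \subseteq V$, and again set $\Delta' = \langle F_1, \ldots, F_{q-1}\rangle$. If $F_q \cap W \subseteq F_i \cap W$ for some $i<q$, then $\Delta_W = \Delta'_W$ and induction on $\Delta'$ supplies a leaf. Otherwise $F_q \cap W$ is a facet of $\Delta_W$, and for every other facet $H' = F_i \cap W$ the leaf condition gives $(F_q \cap W) \cap H' \subseteq F_k \cap W$. When $F_k \cap W$ is not contained in $F_q \cap W$, any facet of $\Delta_W$ containing $F_k \cap W$ is distinct from $F_q \cap W$ and serves as a joint, so $F_q \cap W$ is a leaf of $\Delta_W$.

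The crucial remaining subcase is $F_k \cap W \subseteq F_q \cap W$, where $F_q \cap W$ can genuinely fail to be a leaf of $\Delta_W$; small examples confirm this behaviour. The plan here is to prove the stronger statement that $\Delta_W$ is itself a quasi-forest. By induction, $\Delta'_W$ is a quasi-forest with some leaf order $G_1, \ldots, G_s$, and the facets of $\Delta_W$ are $F_q \cap W$ together with those $G_j$ that are not contained in $F_q \cap W$. I would construct a candidate leaf order of $\Delta_W$ by placing $F_q \cap W$ first and following with the surviving $G_j$ in their original relative order. The containment $G_j \cap (F_q \cap W) \subseteq F_k \cap W \subseteq F_q \cap W$ guarantees that whenever the original joint of $G_j$ in $\Delta'_W$ is absorbed into $F_q \cap W$, the facet $F_q \cap W$ itself serves as a new joint; when that joint survives in $\Delta_W$, it continues to serve. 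Verifying the transition in the residual situation, where a surviving joint must also absorb the intersection with $F_q \cap W$, is the main technical hurdle and is where the interaction between the leaf condition on $F_q$ and the leaf-order structure of $\Delta'_W$ must be exploited.
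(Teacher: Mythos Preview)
Your backward direction is essentially the paper's argument (the paper inducts on $|V|$ rather than on the number of facets, but the content is the same: peel off a leaf, observe that the remaining subcollection is an induced subcomplex, and recurse).

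The forward direction, however, has a real gap, and you have identified it yourself. In the subcase $F_k\cap W\subseteq F_q\cap W$ you propose to build a leaf order of $\Delta_W$ by placing $F_q\cap W$ first and then appending the surviving $G_j$, but you have not verified that a surviving $G_j$ remains a leaf at its step. Concretely, when the old joint $G_m$ of $G_j$ in $\Delta'_W$ survives, you must still absorb the new intersection $G_j\cap(F_q\cap W)$ into $G_m$; you only know this intersection sits inside $F_k\cap W$, and there is no reason $F_k\cap W\cap G_j\subseteq G_m$ in general, since $F_k\cap W$ need not lie in any $G_i$ with $i<j$. Patching this would require a more careful reordering or a different choice of joint, and it is not clear the inductive data you carry is enough.

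The paper sidesteps the entire case analysis with one idea: rather than always testing the last facet $F_q$, pick the \emph{smallest} index $i$ such that $W$ is contained in the vertex set of $\Delta_i=\langle F_0,\ldots,F_i\rangle$. Minimality of $i$ forces $W$ to contain a free vertex of $F_i$ in $\Delta_i$, so $F_i\cap W$ is automatically a facet of $\Delta_W$; one then checks $(\Delta_i)_W=\Delta_W$ by pushing any face down through successive joints, and the joint $F_j$ of $F_i$ in $\Delta_i$ immediately furnishes a joint for $F_i\cap W$ in $\Delta_W$ (or $F_i\cap W$ is disconnected from the rest). No induction on $q$, no subcases, and in particular no need to prove the stronger statement that $\Delta_W$ is itself a quasi-forest. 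Replacing your $F_q$ by this $F_i$ is the missing idea.
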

\begin{proof}
($\Rightarrow$) Since $\Delta$ has a leaf order, we may label the facets of $\Delta, \ F_0,...,F_q$, so that $F_i$ is a leaf of $\Delta_i = \langle F_0,...,F_i \rangle$. For a subset $W \subset V$, choose the smallest $i$ such that $W$ is a subset of the vertex set of $\Delta_i$.

We claim that the complex induced on $W$ in $\Delta_i$ is $\Delta_W$. It is clear that $(\Delta_i)_W \subseteq \Delta_W$. To see the converse, let $F$ be a face of $\Delta_W$, then $F \subseteq F_j$ for some facet $F_j \in \Delta$. If $j \leq i$ then $F \in \Delta_i$ and we are done. If $j > i$ then let $F_k$ be the joint of $F_j$ in $\Delta_j$ and note that $k < j$. Since $F \in \Delta_W \subseteq \Delta_i \subseteq \Delta_j \setminus \langle F_j \rangle$ we have that $F \subseteq F_j \cap \big( \Delta_j \setminus \langle F_j \rangle \big) \subset F_k$. If $k \leq i$ then we are done. If not we may iterate this argument as many times as necessary until we get a facet $F_a \in \Delta_i$ for which $F \subseteq F_a$. Hence $(\Delta_i)_W = \Delta_W$.

We will show that $F_i \cap W$ is a leaf of $\Delta_W$. Since $F_i \in \Delta_i, \ F_i \cap W$ is a face of $\Delta_W$. Let $V_i$ be the vertex set of $\Delta_i$, then we also have that $V_i = V_{i-1} \cup \{\text{free vertices of }F_i \ \text{in} \ \Delta_i\}$ which means that $W \cap \{\text{free vertices of }F_i \ \text{in} \ \Delta_i\} \not = \emptyset$, otherwise $W$ would be contained in the vertex set of $\Delta_{i-1}$. Therefore $F_i \cap W$ is not a subset of any other face in $\Delta_W$, i.e. $F_i \cap W$ is a facet of $\Delta_W$. If $F_j$ is the joint of $F_i$ in $\Delta_i$, then for any face $F \in \Delta, \ F \cap F_i \cap W \subset F_j \cap F_i \cap W$. This means that any facet of $\Delta_W$ (except for $F_i \cap W$) that contains $F_j \cap F_i \cap W$ is a joint for $F_i \cap W$ in $\Delta_W$, since the faces of $\Delta_W$ are also faces of $\Delta$. If no such facet exist (except for $F_i \cap W$) then $F_i \cap W$ is disjoint from the rest of $\Delta_W$. In either scenario, $F_i \cap W$ is a leaf of $\Delta_W$.

($\Leftarrow$) This is done by induction on the size of the vertex set $V$ of $\Delta$. For $|V|$ = 1 or 2, a quick inspection shows that all simplicial complexes with vertex set $V$ have a leaf order and every induced subcomplex has a leaf. Now assume that every simplicial complex on $\leq n$ vertices for which every induced subcomplex has a leaf is a quasi-forest.

Suppose $\Delta$ is a simplicial complex on $n+1$ vertices and that every induced subcomplex of $\Delta$ has a leaf. Since $\Delta$ is an induced subcomplex of itself, it also has a leaf, call it $F$, with free vertices $v_1,...,v_k$. The simplicial complex $\Delta \setminus \langle F \rangle$ is given by the induced subcomplex $\Delta_W$ where $W = V \setminus \{v_1,...,v_k\}$. Every induced subcomplex of $\Delta_W$ has a leaf and $\Delta_W$ is a simplicial complex on $\leq n$ vertices, hence $\Delta_W$ has a leaf order $G_1,....,G_j$. This gives us a leaf order $G_1,....,G_j,F$ for $\Delta$.
\end{proof}

It is known that every induced subcomplex of a simplicial forest is also a simplicial forest (\cite{Faridi-MonRes}), but this property does not characterize simplicial forests.

\begin{definition}
  Let $\Delta$ be a simplicial complex on $V = \{x_1,...,x_n\}$. The \textbf{Stanley-Reisner ideal} of $\Delta$ is a squarefree monomial ideal, $\mathcal{N}(\Delta) \subseteq k[x_1,...,x_n]$, generated by the minimal ``non-faces'' of $\Delta$:
\begin{displaymath}
  \mathcal{N}(\Delta) = (x_{i_1}\cdots x_{i_p} | \{x_{i_1},...,x_{i_p}
  \} \not \in \Delta).
\end{displaymath}
Conversely, let $I \subseteq k[x_1,...,x_n]$ be a squarefree monomial ideal. The \textbf{Stanley-Reisner complex} of $I$ is the simplicial complex $\mathcal{N}(I)$  on $V$ given by
\begin{displaymath}
\mathcal{N}(I) = \big \{ \{x_{i_1},...,x_{i_p}\} \ | \ x_{i_1} \cdots x_{i_p} \not \in I \big \}
\end{displaymath}
\end{definition}
\begin{definition}
  Let $\Delta$ be a simplicial complex on $V = \{x_1,...,x_n\}$. The \textbf{Alexander dual} of $\Delta$ is the simplicial complex
\begin{displaymath}
  \Delta^{\vee} = \{ \{ x_1,...,x_r \} \setminus \tau \ | \ \tau \not \in \Delta \}.
\end{displaymath}
For a squarefree monomial ideal $I \subset k[x_1,...,x_n]$, we define the \textbf{Alexander dual} of $I$ as
\begin{displaymath}
  I^\vee = \mathcal{N}\big((\mathcal{N}(I))^\vee\big)
\end{displaymath}
\end{definition}
The Stanley-Reisner operator $\mathcal{N}$ gives a bijective correspondence between simplicial complexes on $V = \{x_1,...,x_n\}$ and squarefree monomial ideals in $k[x_1,...,x_n]$. Moreover, we have that $\mathcal{N}(\mathcal{N}(\Delta)) = \Delta$ and $\mathcal{N}(\mathcal{N}(I)) = I$. Similarly, we have that $(\Delta^\vee)^\vee = \Delta$ and $(I^\vee)^\vee = I$ (\cite{Faridi-CohenMacaulay}).

These definitions give us tools for constructing and classifying squarefree monomial ideals via simplicial complexes and vice versa. In particular, we are interested in the constructions $\mathcal{N}(\Delta^\vee)$ and $\mathcal{N}(I^\vee)$, where $\Delta$ is a simplicial complex, and $I$ is a squarefree monomial ideal. 

\begin{lemma}[Faridi, \cite{Faridi-CohenMacaulay}] \label{Facet Generator Correspondence} Let $\Delta = \langle F_1,...,F_q \rangle$ be a simplicial complex on $V = \{x_1,...,x_n\}$. Then the minimal generating set of $\mathcal{N}(\Delta^\vee)$ is ${m_1,...,m_q}$ where 
\begin{displaymath}
m_i = \prod_{x_j \not \in F_i} x_j
\end{displaymath}
Similarly, let $I = (m_1,...,m_r) \subseteq k[x_1,...,x_n]$ be a squarefree monomial ideal. Then the facets of $\mathcal{N}(I^\vee)$ are $F_1,...,F_r$ where
\begin{displaymath}
  F_i = \big \{ x_j \ \bigm|  \ x_j \not | \  m_i \big \}
\end{displaymath}
\end{lemma}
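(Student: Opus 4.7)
The plan is to prove both parts of the lemma by carefully unwinding the defining correspondences $\mathcal{N}$ and $(-)^\vee$, and then observing that in each case ``minimal generator of $\mathcal{N}(X^\vee)$'' corresponds to ``maximal face of $X$'' via complementation.

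For the first statement, I would begin by recalling that $\mathcal{N}(\Delta^\vee)$ is by definition generated by the monomials $\prod_{x_j \in \sigma} x_j$ as $\sigma$ ranges over the minimal non-faces of $\Delta^\vee$. So the key step is to identify the minimal non-faces of $\Delta^\vee$. By the definition of Alexander dual, $\sigma \in \Delta^\vee$ iff $V \setminus \sigma \notin \Delta$, hence $\sigma$ is a non-face of $\Delta^\vee$ iff $V \setminus \sigma \in \Delta$, i.e., iff $V \setminus \sigma$ is contained in some facet $F_i$. A \emph{minimal} such $\sigma$ corresponds under complementation to a \emph{maximal} set $V \setminus \sigma$ lying in $\Delta$; these are exactly the facets $F_1,\dots,F_q$. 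Thus the minimal non-faces of $\Delta^\vee$ are precisely the sets $V \setminus F_i$, and the associated monomials are $m_i = \prod_{x_j \notin F_i} x_j$, as claimed.

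For the second statement, I would first reduce it to the first by observing that $\mathcal{N}$ and $(-)^\vee$ are both involutions, so
\[
\mathcal{N}(I^\vee) \;=\; \mathcal{N}\bigl(\mathcal{N}\bigl((\mathcal{N}(I))^\vee\bigr)\bigr) \;=\; (\mathcal{N}(I))^\vee.
\]
Therefore I need to determine the facets of $(\mathcal{N}(I))^\vee$. By the definition of Alexander dual of a complex, the faces are $V \setminus \tau$ for $\tau \notin \mathcal{N}(I)$, and the facets correspond to \emph{minimal} such $\tau$. Now $\tau \notin \mathcal{N}(I)$ exactly means $\prod_{x_j \in \tau} x_j \in I$, and since $I = (m_1,\dots,m_r)$ (with the $m_i$ a minimal squarefree generating set), the minimal subsets $\tau$ with this property are precisely the supports $\mathrm{supp}(m_i)$. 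Complementing gives the facets $F_i = V \setminus \mathrm{supp}(m_i) = \{x_j \mid x_j \nmid m_i\}$, which is the asserted form.

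The only step that requires care — rather than being a direct unraveling of definitions — is the passage from ``minimal non-face'' (respectively ``minimal generator'') to ``facet of $\Delta$'' (respectively ``facet of $\mathcal{N}(I^\vee)$''), since this requires the order-reversing property of complementation on subsets of $V$. I would make this explicit once and then appeal to it in both halves. No additional machinery beyond the definitions of $\mathcal{N}$, $(-)^\vee$, and the two involution identities $\mathcal{N}\circ\mathcal{N} = \mathrm{id}$ and $(-)^{\vee\vee} = \mathrm{id}$ stated in the excerpt is needed.
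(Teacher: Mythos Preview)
Your argument is correct: it is a clean unwinding of the definitions of $\mathcal{N}$ and $(-)^\vee$, using the order-reversing bijection $\sigma \mapsto V\setminus\sigma$ to pass between minimal non-faces of $\Delta^\vee$ and facets of $\Delta$ (and dually between minimal generators of $I$ and facets of $\mathcal{N}(I^\vee)$). There is nothing to compare against, however, because the paper does not prove this lemma at all---it is quoted from \cite{Faridi-CohenMacaulay} and stated without proof, so your write-up simply supplies what the paper omits.
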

\begin{remark} \label{Complex-Ideal Correspondence}
   It follows from lemma \ref{Facet Generator Correspondence} that, like the Stanley-Reisner operator $\mathcal{N}$, the operator $\mathcal{N}((-)^\vee)$ also gives a bijective correspondence between simplicial complexes on $V = \{x_1,...,x_n\}$ and squarefree monomial ideals in $k[x_1,...,x_n]$. 
  
\end{remark}
\begin{definition} Let $S=k[x_1,\ldots,x_n]$, where $k$ is a field. A \textbf{minimal multigraded free resolution} of a multigraded $S$-module $M$ is a chain complex of the form
\begin{equation*}
\begin{xy}
(-10,0)*+{\mathbf{F} :};
(0,0)*+{...}="P0";
(15,0)*+{F_2}="P1";
(30,0)*+{F_1}="P2";
(45,0)*+{F_0}="P3";
(60,0)*+{0}="P5";
{\ar^{\partial_1} "P2"; "P3"}%
{\ar^{\partial_3} "P0"; "P1"}%
{\ar^{\partial_2} "P1"; "P2"}%
{\ar "P3"; "P5"}%
\end{xy}
\end{equation*}
such that each $F_i$ is a multigraded free $S$-module, $\partial_i$ a degree zero homomorphism, $H_0(\mathbf{F}) \cong M$ a degree zero isomorphism , $H_i(\mathbf{F}) = 0$ for $i \geq 1$, and $\partial_{i+1}(F_{i+1}) \sseq \mathbf{m}F_i$ for all $i \geq 0$ where $\mathbf{m} = (x_1,...,x_n)$. Note that monomial ideals are multigraded $S$-modules with respect to the standard $\mathbb N^n$ grading of $S$ and admit a multigraded minimal free resolution.
\end{definition}

\begin{definition}
The $\ith$ \textbf{Betti number} of $M$ over $S$ is defined as $\beta_i^S(M) := \mathrm{rank}(F_i)$. Since $\mathbf{F}$ is multigraded, each free module $F_i$ is a direct sum of modules of the form $S(-m)$, where $S(-m)$ is the free $S$-module generated by a single element in multidegree $m$. We define the \textbf{multigraded Betti numbers} of $M$ by
\begin{displaymath}
\beta_{i,m}^S(M) = \text{number of summands in}\ F_i\ \text{of the form}\ S(-m)
\end{displaymath}
\noindent for each multidegree $m$.

It is known that the minimal free resolution of a finitely generated $S$-module $M$ is unique up to isomorphism, hence the Betti numbers are invariants of $M$.  We can therefore define the \textbf{projective dimension} of $M$ as:
\begin{displaymath}
\mathrm{pd}_S(M) = \max \{ i\ |\ \beta_i^S(M) \not = 0 \}
\end{displaymath}
\end{definition}

Now let $I$ be a monomial ideal in $S$ minimally generated by $m_1,\ldots,m_t$. If $\D$ is a simplicial complex on $t$ vertices, one can label each vertex of $\D$ with one of the generators $m_1,\ldots,m_t$ and each face with the least common multiple of the labels of its vertices. From this labeling we construct a complex of $S$-modules by $I$-homogenizing the augmented simplicial chain complex of $\Delta$ with coefficients in $k$ (for an explicit description of this construction see \cite{Bayer-MonRes,Peeva-FramesDegen}). For any monomial $m$, we denote by $\D_m$ the subcomplex of $\D$ induced on the vertices of $\D$ whose labels divide $m$.

\begin{theorem}[Faridi, \cite{Faridi-MonRes}]\label{t:main}
  Let $\D$ be a simplicial tree whose vertices are labeled by monomials $m_1,\ldots,m_t \in S$, and let $I = (m_1,\ldots,m_t)$ be the ideal in $S$ generated by the vertex labels. The simplicial chain complex ${\mathcal{C}}(\D) = {\mathcal{C}}(\D; S)$ is a free resolution of $S/I$ if and only if the induced subcomplex $\D_m$ is connected for every monomial $m$.
\end{theorem}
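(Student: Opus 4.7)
The plan is to combine two ingredients: the Bayer--Peeva--Sturmfels (BPS) criterion characterizing when a labeled simplicial complex supports a free resolution, and the topological fact that simplicial trees are contractible. Recall that BPS says $\mathcal{C}(\D)$ is a multigraded free resolution of $S/I$ if and only if the induced subcomplex $\D_m$ has $\tilde{H}_i(\D_m;k) = 0$ for every $i \geq 0$ and every monomial $m$. Starting from this, both directions of the theorem reduce to understanding the topology of $\D_m$.

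For the forward direction ($\Rightarrow$), the argument is immediate from BPS: if $\mathcal{C}(\D)$ resolves $S/I$, then in particular $\tilde{H}_0(\D_m;k) = 0$ for every monomial $m$ with $\D_m \neq \emptyset$, which is exactly the statement that $\D_m$ is connected.

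For the reverse direction ($\Leftarrow$), I would first invoke the result cited in the excerpt (from \cite{Faridi-MonRes}) that every induced subcomplex of a simplicial forest is again a simplicial forest. Since $\D$ is a simplicial tree, each $\D_m$ is therefore a simplicial forest; assuming it is also connected makes it a simplicial tree. It then remains to show that every simplicial tree $T$ is acyclic, i.e.\ $\tilde{H}_i(T;k) = 0$ for all $i \geq 0$. I would prove this by induction on the number $q$ of facets, using a leaf order $F_1,\ldots,F_q$. For $q = 1$, $T$ is a simplex and the claim is standard. For $q \geq 2$, let $F = F_q$ be the final leaf with joint $G$ and a free vertex $v$; because every other facet intersects $F$ inside $G$, the subcomplex $\langle F_1,\ldots,F_{q-1}\rangle$ is obtained from $T$ by a sequence of elementary collapses along the free vertices of $F$ (which are present in no other facet). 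Elementary collapses are simple homotopy equivalences, so $T$ is homotopy equivalent to $\langle F_1,\ldots,F_{q-1}\rangle$, which is contractible by induction. Applying this to $\D_m$ yields $\tilde{H}_i(\D_m;k) = 0$ for all $i$, and BPS then delivers the resolution.

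The main obstacle I expect is verifying the inductive collapse step cleanly: one must check that the free vertices of $F$ can be removed one at a time via elementary collapses, which requires the precise combinatorial consequence of $F$ being a leaf (namely that every face containing a free vertex of $F$ lies in $F$, so the pair (free vertex, maximal face through it) is a free pair in the sense of simple-homotopy theory). Once this is established, the rest of the argument is assembly: apply BPS, reduce acyclicity of $\D_m$ to acyclicity of simplicial trees via the induced-subcomplex property of forests, and invoke contractibility.
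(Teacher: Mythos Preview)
This theorem is not proved in the present paper; it is quoted from \cite{Faridi-MonRes} and used as a tool. So there is no proof here to compare your proposal against.

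That said, your outline is correct and is essentially the argument given in the cited reference: the forward direction is immediate from the BPS acyclicity criterion via $\tilde H_0$, and the reverse direction combines (i) the fact (mentioned in this paper right after Proposition~\ref{prop 2}) that induced subcomplexes of simplicial forests are simplicial forests, so each connected $\D_m$ is a simplicial tree, with (ii) acyclicity of simplicial trees, and then invokes BPS. One small caution on your collapse step: the free pair to remove is not literally the free vertex paired with $F$, but rather a codimension-one face of $F$ that contains a free vertex, paired with $F$ (any face containing a free vertex of $F$ lies in $F$, so such a face has $F$ as its unique coface). Alternatively, Mayer--Vietoris handles (ii) cleanly: writing $T=\langle F_1,\dots,F_{q-1}\rangle\cup\langle F_q\rangle$, the leaf condition forces the intersection to be the simplex on $F_q\cap G$, so both $\langle F_q\rangle$ and the intersection are contractible and $\tilde H_*(T)\cong\tilde H_*(\langle F_1,\dots,F_{q-1}\rangle)$, which vanishes by induction on the number of facets.
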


An example of a simplicial tree $\D$ ``supporting'' a free resolution of
a monomial ideal (that is, the simplicial chain complex of $\D$ being
a free resolution of the ideal) is the Taylor
resolution~\cite{Taylor-PhD}, in which case $\D$ is a simplex (one
facet).

Theorem~\ref{t:main} implies that the Betti vector of $I$ (that is,
the vector whose $i$-th entry is the $i$-th Betti number of $I$) is
bounded by the $f$-vector of a simplicial tree $\D$ that supports a resolution of it:
\begin{eqnarray*}
\beta(I)=(\beta_0(I),\ldots,\beta_q(I))\leq
(f_0(\D),\ldots,f_q(\D))=\mathbf{f}(\D).
\end{eqnarray*}

Equality holds if some extra conditions are satisfied:

\begin{theorem}[Bayer, Peeva, Sturmfels, \cite{Bayer-MonRes}]\label{t:BPS} With notation as in
Theorem~\ref{t:main}, ${\mathcal{C}}(\D)$ is a minimal free resolution of $S/I$ if and
only if $m_A \neq m_{A'}$ for every proper subface $A'$ of a face $A$ of $\D$.
\end{theorem}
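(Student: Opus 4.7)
The plan is to reduce the minimality condition to a concrete statement about the entries of the differential matrices of $\mathcal{C}(\D)$. Recall that the homogenization of the simplicial chain complex replaces the standard boundary map of $\D$ by
\begin{displaymath}
\partial(e_A) = \sum_{v_i \in A} (-1)^i \, \frac{m_A}{m_{A\setminus v_i}} \, e_{A\setminus v_i},
\end{displaymath}
where $m_A = \lcm\{m_v : v \in A\}$. So every nonzero entry of every differential matrix is a monomial of the form $m_A/m_{A\setminus v_i}$ for some face $A$ and some vertex $v_i \in A$. By hypothesis, Theorem~\ref{t:main} already guarantees that $\mathcal{C}(\D)$ is a free resolution, so the only thing to check is minimality.

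Next I would use the standard criterion that a multigraded free resolution is minimal if and only if no entry of any differential is a unit in $S$, i.e. a nonzero constant. In our setting this translates directly to: $\mathcal{C}(\D)$ is minimal if and only if $m_A / m_{A\setminus v_i} \neq 1$ for every face $A$ of $\D$ and every vertex $v_i \in A$; equivalently, $m_A \neq m_{A'}$ for every codimension-one subface $A' = A\setminus v_i$ of $A$.

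It then remains to bootstrap from codimension-one subfaces to arbitrary proper subfaces $A' \subsetneq A$. One direction is immediate: a codimension-one subface is a proper subface. For the converse, suppose $A' \subsetneq A$ is a proper subface with $m_{A'} = m_A$. Choose any chain of faces
\begin{displaymath}
A' = A_0 \subsetneq A_1 \subsetneq \cdots \subsetneq A_\ell = A
\end{displaymath}
obtained by adding one vertex at a time (these are all faces of $\D$ because $\D$ is closed under taking subsets). Since $m_{A'} \mid m_{A_j} \mid m_A$ for every $j$ and $m_{A'} = m_A$, all $m_{A_j}$ are equal. In particular $m_{A_{\ell-1}} = m_{A_\ell} = m_A$, which exhibits a codimension-one subface of $A$ with the same multidegree, contradicting the codimension-one version of the condition. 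Combining the two directions gives the theorem.

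The proof is essentially mechanical once the form of the homogenized differential is written down; the only substantive point is the chain argument in the final paragraph, which reduces the ``proper subface'' criterion to the easier ``codimension-one subface'' criterion. This is the step I would expect a careful reader to want spelled out, since the rest follows from a direct inspection of the boundary formula together with the minimality criterion on entries of the differential.
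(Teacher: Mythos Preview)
The paper does not supply its own proof of this theorem; it is quoted from Bayer--Peeva--Sturmfels as a background result and used without argument. So there is no comparison to make against the paper's approach.

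Your proof is correct and is essentially the standard argument. One small point of care: the biconditional as literally stated in the paper reads ``$\mathcal{C}(\D)$ is a minimal free resolution $\Leftrightarrow$ the label condition,'' and you implicitly (and correctly) read ``with notation as in Theorem~\ref{t:main}'' as including the hypothesis that $\mathcal{C}(\D)$ is already a free resolution. That is the intended reading, since the label condition $m_A \neq m_{A'}$ by itself does not force acyclicity; you might make this explicit in a single clause rather than the slightly loose ``By hypothesis, Theorem~\ref{t:main} already guarantees\ldots''. Otherwise the three steps---writing down the homogenized boundary, invoking the criterion that minimality is equivalent to all matrix entries lying in $\mathbf{m}$, and the chain argument reducing arbitrary proper subfaces to codimension-one subfaces---are exactly what is needed, and the chain argument is the right place to spend the extra sentence.
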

\begin{example}\label{Example of Homogenization}
  Consider the ideal $I = (x_1x_3x_6, x_1x_4x_6, x_1x_2x_4, x_4x_5x_6) \subset k[x_1,...,x_6]$, and the labeled simplicial complex
\begin{figure}[H] \centering
 \includegraphics{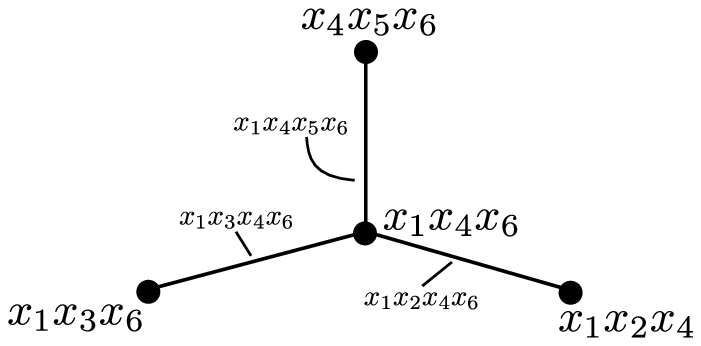}
\end{figure} 
\noindent From this labeled simplicial complex we construct the complex of $S$-modules
\begin{equation*}
\begin{xy}
(-20,0)*+{0}="P2";
(5,0)*+{\begin{smallmatrix}
    S(-x_1x_2x_4x_6)  \\
      \oplus \\
      S(-x_1x_3x_4x_6)  \\
      \oplus \\
      S(-x_1x_4x_5x_6)  \\
      \end{smallmatrix}}="P3";
(50,0)*+{\begin{smallmatrix}
      S(-x_1x_2x_4)  \\
      \oplus \\
      S(-x_1x_3x_6)  \\
      \oplus \\
      S(-x_1x_4x_6)  \\
      \oplus \\
      S(-x_4x_5x_6)
      \end{smallmatrix}}="P4";
(105,0)*+{S}="P5";
(120,0)*+{0}="P6";
{\ar "P2"; "P3"}%
{\ar^{\left[ \begin{smallmatrix}
       x_6   &   0    &  0       \\
       0     &   x_4  &  0       \\
      -x_2   &  -x_3  &  x_5      \\
       0     &   0    & -x_1 
      \end{smallmatrix}\right]} "P3"; "P4"}%
{\ar^(.55){\left[ \begin{smallmatrix}
      x_1x_2x_4 & x_1x_3x_6 & x_1x_4x_6 & x_4x_5x_6
      \end{smallmatrix}\right]} "P4"; "P5"}%
{\ar "P5"; "P6"}%
\end{xy}
\end{equation*}
which we can verify, through direct computation or by using theorems \ref{t:main} and \ref{t:BPS}, is the minimal multigraded free resolution of $S/I$. We will return to this example in the next section to show how to construct the tree supporting this minimal free resolution algorithmically using the quasi-tree $\mathcal{N}(I^\vee)$.  
\end{example}
\section{Monomial ideals of projective dimension 1}

It is known that not all monomial ideals have a simplicial, or even cellular, minimal free resolutions~(\cite{Reiner-LinSyz,Velasco-MinResNotCW}).  It is also known that if a simplicial complex supports a minimal resolution of a monomial ideal, then it must be acyclic~(\cite{Peeva-FramesDegen, Phan-MinMon}), and that simplicial trees are acyclic~(\cite{Faridi-MonRes}). A natural question to ask is: Which ideals have minimal resolutions supported on a simplicial tree?

We will address this questions for squarefree monomial ideals with projective dimension 1. Theorem \ref{pd is one iff graph tree} and Proposition \ref{resolution on graph tree is minimal} show that, unlike the general case, if a monomial ideal $I$ has projective dimension 1 then $I$ necessarily has a minimal free resolution supported on a simplicial complex which is, in fact, a tree in the context of both graphs and simplicial complexes. Theorem \ref{Quasi forest then resolution supported on a graph tree} gives a sufficient combinatorial criteria for determining if a squarefree monomial ideal has projective dimension 1. The remainder of the paper will then be devoted to establishing the necessity of these criteria, as well as providing some examples.

\begin{theorem} \label{pd is one iff graph tree}
A monomial ideal $I$ has $\pd(I) \leq 1$ if and only if $S/I$ has a minimal
resolution supported on a (graph) tree
\end{theorem}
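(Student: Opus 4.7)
The reverse implication is immediate: a graph-tree is a $1$-dimensional simplicial complex, so its (homogenized) simplicial chain complex has length at most $1$, giving a free resolution of $S/I$ of length at most $2$, whence $\pd_S(I)\leq 1$.

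For the forward implication, let $I=(m_1,\ldots,m_t)$ with $\pd_S(I)\leq 1$, and let $v_i$ be a vertex labeled by $m_i$. The plan is to construct a graph-tree $T$ on $v_1,\ldots,v_t$ whose simplicial chain complex supports a minimal free resolution of $S/I$. By Theorems~\ref{t:main} and~\ref{t:BPS}, it suffices to produce a tree $T$ for which (i)~the induced subgraph $T_m$ is connected for every monomial $m$, and (ii)~every edge label $\lcm(m_i,m_j)$ strictly exceeds both $m_i$ and $m_j$; (ii) is automatic since $\{m_1,\ldots,m_t\}$ is a minimal generating set. I would argue by induction on $t$; the cases $t=1,2$ are trivial. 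For the inductive step, I would select a ``leaf'' generator $m_k$ together with a ``joint'' generator $m_\ell$ satisfying (a)~$\pd_S(J)\leq 1$, where $J=(m_1,\ldots,\widehat{m_k},\ldots,m_t)$, and (b)~$m_\ell\mid \lcm(m_k,m_i)$ for every $i\neq k$. By induction there is a tree $T_J$ supporting a minimal free resolution of $S/J$, and I would form $T$ by attaching $v_k$ to $v_\ell$ in $T_J$.

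An Euler-characteristic count in a length-one resolution of $I$ gives $\beta_1(I)=\beta_0(I)-1=t-1$, so $T$ (with $t-1$ edges) has the right number of second syzygies. Condition~(b) is exactly what keeps $T_m$ connected once $v_k$ joins in: if $v_k\in T_m$ and $T_m$ contains some other vertex $v_j$, then $\lcm(m_k,m_j)\mid m$, so by~(b) $m_\ell\mid m$, placing $v_\ell$ in $T_m$ and connecting $v_k$ through the new edge to the rest of $T_m$ (already connected inside $T_J$ by the inductive hypothesis).

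The main obstacle I anticipate is the existence of a pair $(m_k,m_\ell)$ satisfying (a) and (b). The first syzygy module of a monomial ideal is generated by Taylor-type binomials $(\lcm(m_i,m_j)/m_i)\,e_i-(\lcm(m_i,m_j)/m_j)\,e_j$, and when $\pd_S(I)\leq 1$ I expect a minimal generating set of these binomials to encode exactly a spanning-tree structure on $v_1,\ldots,v_t$. Extracting a leaf $m_k$ of this structure---a generator appearing in only one minimal binomial syzygy, paired with some $m_\ell$---and then verifying that removing $m_k$ preserves $\pd\leq 1$ is where the substance of the argument lies. I would approach this by a careful inspection of the multigraded Betti numbers of $I$ in degrees dividing the $\lcm(m_k,m_i)$, possibly drawing on the combinatorial bookkeeping afforded by the induced-subcomplex criterion in Proposition~\ref{prop 2}.
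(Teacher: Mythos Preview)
Your reverse implication matches the paper's. For the forward direction, however, there is a genuine gap: you correctly isolate the existence of a leaf/joint pair $(m_k,m_\ell)$ satisfying (a) and (b) as ``where the substance of the argument lies,'' but you never establish it. The plan to ``carefully inspect multigraded Betti numbers'' and to lean on Proposition~\ref{prop 2} is not a proof; Proposition~\ref{prop 2} concerns induced subcomplexes of quasi-forests and does not by itself produce a generator of $I$ with the divisibility property (b), nor does it show that deleting such a generator keeps $\pd\leq 1$. In fact the most natural way to locate such a pair is to take a leaf of a tree already supporting the resolution---precisely the object you are trying to build---so the induction, as written, is in danger of circularity.

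The paper sidesteps the induction entirely and extracts the tree in one stroke. It uses the very fact you mention: $\ker(\phi)$ is generated by Taylor binomials $\dfrac{\lcm(m_i,m_j)}{m_i}e_i-\dfrac{\lcm(m_i,m_j)}{m_j}e_j$, so one may choose a \emph{minimal} generating set $f_1,\ldots,f_t$ of $\ker(\phi)$ of this shape. Each $f_s$ involves exactly two basis vectors $e_i,e_j$ and hence names an edge $\{v_i,v_j\}$; together these edges form a graph $G$ on $\{v_1,\ldots,v_r\}$. Dehomogenizing the minimal resolution (tensoring with $S/(x_1-1,\ldots,x_n-1)$) turns it into the augmented simplicial chain complex of $G$ over $k$, and by Peeva--Velasco this dehomogenized complex remains exact. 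Exactness forces $\tilde H_0(G)=0$ and $H_1(G)=0$, so $G$ is connected and acyclic, i.e.\ a tree, and by construction its $I$-homogenization is the minimal resolution you began with. Thus the ``spanning-tree structure'' you anticipated is read off directly from a well-chosen minimal syzygy basis, with no need to peel off leaves one at a time.
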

\begin{proof}
($\Leftarrow$) Clear.

\noindent ($\Rightarrow$) We know that $\pd(I) = 0$ if and only if $I = (m)$ is principal, hence the minimal free resolution of $S/I$ is supported on the graph with a single vertex and no edges. Now, assume that $\pd(I) = 1$. Then $S/I$ has a minimal resolution
  of the form
\begin{equation*}
\begin{xy}
(-5,0)*+{0}="P0";
(10,0)*+{S^t}="P1";
(25,0)*+{S^r}="P2";
(40,0)*+{S}="P3";
(55,0)*+{0}="P4";
{\ar^{\phi} "P2"; "P3"}%
{\ar "P0"; "P1"}%
{\ar^{\psi} "P1"; "P2"}%
{\ar "P3"; "P4"}%
\end{xy}
\end{equation*}
where $\phi(e_i) = m_i$ for the basis elements $e_i$ of $S^r$, and
$\psi(g_j) = f_j$ where the $g_j$ form a basis of $S^t$ and the $f_j$
form a minimal generating set of $\ker(\phi)$. It is shown
(see~\cite{Ene-GrobComAlg}, Corollary 4.13) that $\ker(\phi)$ can be
generated (though not necessarily minimally) by the elements
\begin{displaymath}
\dfrac{\lcm(m_i,m_j)}{m_i} e_i -\dfrac{\lcm(m_i,m_j)}{m_j} e_j
\end{displaymath}

Let $f_1,...,f_t$ be a minimal generating set of $\ker(\phi)$ which have this form. This gives us a complete description of the map $\psi$ as a matrix with exactly two non-zero monomial entries in each column with coefficients corresponding to those appearing in the $f_i$ (i.e one column entry has coefficient $1$ and the other has coefficient $-1$). Dehomogenizing this resolution (i.e. tensoring the complex by $\dfrac{S}{(x_1-1,...,x_n-1)}$) gives us the sequence of vector spaces
\begin{equation} \label{Equation 1}
\begin{xy}
(-5,0)*+{0}="P0";
(10,0)*+{k^t}="P1";
(25,0)*+{k^r}="P2";
(40,0)*+{k}="P3";
(55,0)*+{0}="P4";
{\ar^{(11...1)} "P2"; "P3"}%
{\ar "P0"; "P1"}%
{\ar^{A} "P1"; "P2"}%
{\ar "P3"; "P4"}%
\end{xy}
\end{equation}
which is exact (Theorem 3.8 of~\cite{Peeva-FramesDegen}) and where $A$ is a matrix in which every column has exactly one entry which is 1, one entry which is -1, and the rest equal to zero. If we consider each basis element of $k^r$ as a vertex and each basis element $e_i$ of $k^t$ as an edge between the two vertices determined by the basis elements of $k^r$ to which $e_i$ is sent, we may construct a graph $G$ for which ${\mathcal{C}}(G;k)$ is the chain complex in (\ref{Equation 1}). Since this chain complex is exact the graph $G$ is acyclic, hence a tree (this would also imply that $t = r-1$).
\end{proof}

In fact, more is true. 

\begin{proposition}\label{resolution on graph tree is minimal}
If $I$ is a monomial ideal such that $S/I$ has a resolution supported on a tree $T$, then that resolution is minimal.
\end{proposition}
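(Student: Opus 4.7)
The plan is to invoke the Bayer--Peeva--Sturmfels criterion (Theorem~\ref{t:BPS}): the simplicial chain complex ${\mathcal{C}}(T)$ is a minimal free resolution of $S/I$ if and only if $m_A \neq m_{A'}$ for every face $A$ of $T$ and every proper subface $A' \subsetneq A$. Since a tree is a one-dimensional simplicial complex, the faces of $T$ are just vertices and edges, and the labels are $1$ on the empty face, $m_v$ on a vertex $v$, and $\lcm(m_i,m_j)$ on an edge $\{v_i,v_j\}$. Hence only three divisibility-type conditions need to be verified per edge, plus one per vertex.

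First, for each vertex $v$ the only proper subface is $\emptyset$; the required inequality is $m_v \neq 1$, which holds since $I$ is a proper ideal (we may assume this, otherwise the statement is vacuous). Second, for each edge $\{v_i,v_j\}$, the conditions $\lcm(m_i,m_j) \neq 1$, $\lcm(m_i,m_j) \neq m_i$, and $\lcm(m_i,m_j) \neq m_j$ translate respectively into the automatic inequality together with the non-divisibility relations $m_j \nmid m_i$ and $m_i \nmid m_j$. Under the standing convention that the vertex labels $m_1,\ldots,m_t$ are the minimal generators of $I$ (established in the paragraph before Theorem~\ref{t:main}), no $m_i$ can divide another $m_j$, since that would render $m_j$ redundant and contradict minimality. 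Thus the Bayer--Peeva--Sturmfels condition is satisfied on every face of $T$, and the resolution is minimal.

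I expect essentially no obstacle in this argument; the proof is a direct verification, and the content of the proposition is really that the geometric hypothesis ``$T$ is one-dimensional, so its only faces are vertices and edges'' collapses the Bayer--Peeva--Sturmfels condition entirely to the non-divisibility of the minimal monomial generators. The one subtle point is the reliance on the convention that the vertex labels form the minimal generating set of $I$; without this, the statement would fail, as the two-vertex tree with labels $x$ and $x^2$ supports a non-minimal resolution of $S/(x,x^2) = S/(x)$.
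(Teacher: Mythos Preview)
Your proof is correct and is essentially the same as the paper's: both reduce to the observation that since the vertex labels are the minimal generators of $I$, no $m_i$ divides another, so the edge label $\lcm(m_i,m_j)$ strictly differs from each endpoint label. The only difference is presentational---the paper unpacks the $I$-homogenization explicitly and checks that the matrix entries $\lcm(m_{s_1},m_{s_2})/m_{s_i}$ lie in $\mathbf{m}$, whereas you invoke Theorem~\ref{t:BPS} directly; the underlying argument is identical.
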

\begin{proof}
If $m_1,...,m_r$ are the minimal generators of $I$ then $T$ would have to have $r$ vertices and $r-1$ edges. When we regard $T$ as a simplicial complex we get the simplicial chain complex
\begin{equation*}
\begin{xy}
(-15,0)*+{\mathcal{C}(T;k) : 0}="P0";
(10,0)*+{k^{r-1}}="P1";
(30,0)*+{k^r}="P2";
(45,0)*+{k}="P3";
(60,0)*+{0}="P4";
{\ar^{(11...1)} "P2"; "P3"}%
{\ar "P0"; "P1"}%
{\ar^{\partial_2} "P1"; "P2"}%
{\ar "P3"; "P4"}%
\end{xy}
\end{equation*}
where $\partial_2$ is a matrix in which every column has one entry
equal to 1, one entry equal to $-1$, and the rest equal to zero. Fix a
basis $u_{i,j}$ for $\mathcal{C} (T;k)$. The $I$-homogenization of
$T$~(\cite{Peeva-FramesDegen}) would then give a resolution of $I$ of
the form
\begin{equation*}
\begin{xy}
(-15,0)*+{\mathbf{G}: \ 0}="P6";
(12,0)*+{\displaystyle{\bigoplus_{j = 1}^{r-1}} S(-\alpha_{2,j})}="P7";
(46,0)*+{\displaystyle{\bigoplus_{j = 1}^{r}} S(-\alpha_{1,j})}="P2";
(70,0)*+{S}="P3";
(80,0)*+{0}="P5";
{\ar "P6"; "P7"}%
{\ar^(.67){d_{1}} "P2"; "P3"}%
{\ar^{d_{2}} "P7"; "P2"}%
{\ar "P3"; "P5"}%
\end{xy}
\end{equation*}
with multihomogeneous basis $e_{i,j}$ such that $\mdeg (e_{i,j}) = \alpha_{i,j}$. We know that 
\begin{displaymath}
\alpha_{1,j} = \mdeg (e_{1,j}) = \mdeg (m_j)
\end{displaymath}
for $j = 1,...,r$ and the $\alpha_{2,j}$ are given by
\begin{displaymath}
\alpha_{2,j} = \mdeg \big( \lcm(\mdeg(e_{1,s}) |\  a_{s,j} \not = 0) \big)
\end{displaymath} 
where the $a_{s,j}$ come from the boundary map 
 \begin{displaymath}
\partial_2(u_{2,j}) = \sum_{s=1}^q a_{s,j} u_{1,s} 
\end{displaymath} 

For each $j$, exactly 2 of the $a_{s,j} \not = 0$, so the multidegrees of the $e_{2,j}$ are actually of the form $\mdeg(e_{2,j}) = \mdeg(\lcm(m_{i_1},m_{i_2}))$ where $m_{i_1}$ and $m_{i_2}$ are minimal generators of $I$. With this in mind we consider the boundary map
\begin{equation*}
d_2(e_{2,j}) = \sum_{s=1}^q a_{s,j} \frac{\mdeg(e_{2,j})}{\mdeg(e_{1,s})} e_{1,s}
\end{equation*}
which tells us that the matrix representation of $d_2$ has entries 
\begin{equation*}
[d_2]_{s,j} = a_{s,j} \frac{\mdeg (e_{2,j})} {\mdeg (e_{1,s})}
\end{equation*}

If $a_{s,j} = 0$ then $[d_2]_{s,j} = 0$. If $a_{s_1,j},a_{s_2,j} \not
= 0$ then we have that $\mdeg(e_{2,j}) = \lcm(m_{s_1},m_{s_2})$. Since
$m_{s_1},m_{s_2}$ are minimal generators of $I$ we know that $m_{s_1}$
and $m_{s_2}$ strictly divide $\mdeg(e_{2,j}) =
\lcm(m_{s_1},m_{s_2})$, so that $[d_2]_{s,j} \in \mathbf{m}$ for all
$s,j$. By construction, all entries of $d_1$ are in $\mathbf{m}$ and
we can conclude that this resolution is minimal.
\end{proof}

 Next we show that all monomial ideals of projective dimension~$1$ (or
 their squarefree polarizations) can be characterized as
 $\mathcal{N}(\Delta^\vee)$ where $\Delta$ is a quasi-forest. This
 fact itself is known: Herzog, Hibi, and Zheng~\cite{Herzog-Dirac}
 proved it by using the Hilbert-Burch Theorem~\cite{Eisenbud1995}, and
 interpreting aspects of this theorem in the context of the
 Stanley-Reisner ring of the Alexander Dual of a quasi-tree.

Our proof, on the other hand, gives a specific and simple construction
of graph trees that support a resolution of
$\mathcal{N}(\Delta^\vee)$. The minimality of the resolution is
guaranteed by the previous lemma. 

\begin{theorem} \label{Quasi forest then resolution supported on a graph tree}
If $\Delta$ is a quasi-forest, then $S/\mathcal{N}(\Delta^\vee)$ has a minimal resolution which is supported on a tree.
\end{theorem}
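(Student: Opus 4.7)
The plan is to build the supporting graph tree $T$ explicitly from a leaf order of $\Delta$, verify the combinatorial hypothesis of Theorem~\ref{t:main} so that $T$ supports a resolution, and then invoke Proposition~\ref{resolution on graph tree is minimal} to obtain minimality for free.

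First, use Lemma~\ref{Facet Generator Correspondence} to identify the minimal generators of $\mathcal{N}(\Delta^\vee)$ with the facets of $\Delta$, so that each facet $F_i$ corresponds to $m_i = \prod_{x_j \notin F_i} x_j$. Fix a leaf order $F_1,\ldots,F_q$ of $\Delta$; for each $i\ge 2$, if $F_i$ meets some earlier facet choose a joint $F_{j(i)}\in\langle F_1,\ldots,F_{i-1}\rangle$ as in Definition~\ref{quasi-tree}, and otherwise (when $F_i$ opens a new connected component) pick $j(i)<i$ arbitrarily. Let $T$ be the graph on vertex set $\{m_1,\ldots,m_q\}$ with edges $\{m_i,m_{j(i)}\}$ for $2\le i\le q$. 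Then $T$ has $q$ vertices, $q-1$ edges, and is connected by construction, so $T$ is a tree.

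By Theorem~\ref{t:main}, the task reduces to showing that the induced subcomplex $T_m$ is connected for every monomial $m$. The translation to the combinatorics of $\Delta$ is direct: letting $W = V\setminus\mathrm{supp}(m)$, the divisibility $m_i\mid m$ holds if and only if $W\sseq F_i$, so the vertex set of $T_m$ is precisely $\{m_i : W\sseq F_i\}$. I will argue by induction on the index $i$ that, if $i_0=\min\{k : m_k\in T_m\}$, then every $m_i\in T_m$ lies in the component of $m_{i_0}$. The base case $i=i_0$ is trivial. For $i>i_0$ there are two possibilities. If $F_i\cap F_{i_0}\neq\emptyset$, the leaf/joint inclusion $F_i\cap F_{i_0}\sseq F_{j(i)}$ together with $W\sseq F_i\cap F_{i_0}$ forces $W\sseq F_{j(i)}$, so $m_{j(i)}\in T_m$ and the edge $\{m_i,m_{j(i)}\}$ with $j(i)<i$ connects $m_i$ to a lower-index vertex in $T_m$; the inductive hypothesis finishes this case. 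If instead $F_i\cap F_{i_0}=\emptyset$, then $W=\emptyset$ and hence $m_k\mid m$ for every $k$, so $T_m=T$ is the whole tree and already connected.

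The main subtlety is in the inductive step: one has to be sure that the particular joint $F_{j(i)}$ committed to during the construction of $T$ still contains $W$ whenever both $F_i$ and $F_{i_0}$ contain $W$. This is exactly where the definition of a leaf does the work, because $F_{j(i)}$ absorbs the intersection of $F_i$ with \emph{every} other facet below it, hence with $F_{i_0}$ in particular; the choice made at construction time is therefore automatically compatible with each later restriction $T_m$. Once $T_m$ is known to be connected for every $m$, Theorem~\ref{t:main} yields a resolution of $S/\mathcal{N}(\Delta^\vee)$ supported on $T$, and Proposition~\ref{resolution on graph tree is minimal} upgrades it to a minimal one.
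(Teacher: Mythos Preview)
Your proof is correct and follows essentially the same route as the paper: construct the tree $T$ from a leaf order by attaching each new vertex to (the vertex of) a chosen joint, translate the divisibility condition on labels into the containment $W\subseteq F_k$ (equivalently $F_i\cap F_j\subseteq F_k$ in the paper's pairwise formulation), and then run the same descent argument through joints to connect every vertex of $T_m$ to the one of minimal index, finishing with Theorem~\ref{t:main} and Proposition~\ref{resolution on graph tree is minimal}. The only cosmetic differences are that the paper reduces to checking $T_m$ for $m=\lcm(m_i,m_j)$ while you check all $m$ directly, and you make the disconnected (forest) case explicit by allowing an arbitrary $j(i)$ when $F_i$ is isolated from earlier facets; neither changes the substance of the argument.
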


   \begin{proof} First we shall construct a tree $T$ whose
   vertices will be labeled by the monomial generators of
   $\mathcal{N}(\Delta^\vee)$. Then we will show that the forest
   induced by the $\lcm$ of any two of the vertex labels is
   connected. If these induced forests are connected then so is any
   forest induced by an element of the $\lcm$-lattice of $I$ and the
   rest follows from Theorem 3.2 of~\cite{Faridi-MonRes}.

To construct the tree we do the following:
\begin{itemize}
\item[1)] Order the facets of $\Delta$ as $F_0,...,F_q$, so that $F_i$ is a leaf of $\Delta_i = \langle F_1,...,F_i \rangle$.
\item[2)] Start with the one vertex tree $T_0 = (V_0,E_0)$ where $V_0 = \{v_0\}$ and $E_0 = \emptyset$
\item[3)] For $i = 1,...,q$ do the following:
\begin{itemize}
\item[-] Pick $u < i$ such that $F_u$ is a joint of the leaf $F_i$ in $\Delta_i$
\item[-] Set $V_i = V_{i-1}\cup \{v_i\}$
\item[-] Set $E_i = E_{i-1}\cup \{(v_i,v_{u})\}$
\end{itemize}
\end{itemize}

What we get is a graph $T = (V_q,E_q)$ which, by construction, is a
tree. To complete our construction we determine a labeling of the
vertices of $T$ by which to homogenize. To do this we label the vertex
$v_i$ with the monomial
\begin{displaymath}
m_i = \prod_{x_j\in W \setminus F_i} x_j
\end{displaymath} 
where $W = \{x_1,..., x_n \}$ is the vertex set of $\Delta$. By
Lemma~\ref{Facet Generator Correspondence}, these labels are the
monomial generators of $\mathcal{N}(\Delta^\vee)$, so we have
constructed a tree and specified a labeling. The $I$-homogenization
of $T$ with respect to this labeling results in the $I$-complex
$\mathbf{F}_T$. We are left with proving that $\mathbf{F}_T$ is a
resolution.

Since $T$ is a tree, and hence a simplicial tree, to show that
$\mathbf{F}_T$ supports a resolution of $\mathcal{N}(\Delta^\vee)$ it
is sufficient to show that $T$ is connected on the subgraphs $T_{i,j}$
which are the induced subgraphs on the vertices $m_k$ such that $m_k
\big{|} \lcm(m_i,m_j)$, for any minimal generators $m_i, \ m_j$ in
$I$. We first observe that
\begin{displaymath}
\lcm(m_i,m_j) = \prod_{x_l\in W \setminus F_i\cap F_j} x_l
\end{displaymath}
so that
\begin{displaymath}
m_k \big{|}\lcm(m_i,m_j) \IFF F_i\cap F_j\subset F_k\
\end{displaymath}

Now, to show that every $T_{i,j}$ is connected we define $A_{i,j}$ to be the set 
\begin{displaymath}
A_{i,j} = \{0\leq k\leq n : m_k|\lcm(m_i,m_j)\} = \{0\leq k\leq n : F_i\cap F_j\subset F_k\}
\end{displaymath} 
and let $l$ be the smallest integer in $A_{i,j}$. We will show that for each $k\in A_{i,j}$, there is a path in $T_{i,j}$ connecting $v_k$ and $v_l$. \\

If $k\in A_{i,j},\ k\not = l$ then we can consider the facet $F_k$ in $\Delta_k$ which is a leaf, so it has a joint $F_{k_J}$ for some $k_J < k$. Since $l<k$, $F_l$ is a facet of $\Delta_k$ as well. This means that 
\begin{displaymath}
F_i\cap F_j \subset F_k\cap F_l \subset F_{k_J} \implies F_i\cap F_j \subset F_{k_J} \implies k_J \in A_{i,j}.
\end{displaymath}

 Since $k_J \in A_{i,j}$ for any joint of $F_k \in \Delta_k$, it is
 true for the specific joint we used in Step (3) of our construction
 of $T$. We may also conclude that $k_J \geq l$, by the minimality of
 $l$. Hence it is the case that the edge $\{ v_k,v_{k_J} \} \in T$
 which in turn implies that $\{ v_k,v_{k_J} \} \in T_{i,j}$. Since $l
 \leq k_J<k$, we can iterate this argument for $k_J$ and its joint in
 $\Delta_{k_J}$, and so on, finitely many times to get a path from
 $v_k$ to $v_l$ in $T_{i,j}$.
\end{proof}

\begin{remark} In the construction of $T$, we had some
  choice as to what joint we chose for a facet $F_k$ in the simplicial
  complex $\Delta_k$, hence the tree that we constructed is not
  unique. Furthermore, the proof follows through regardless of our
  choices, so that any tree that we may have constructed would give us
  a resolution of $\mathcal{N}(\Delta^\vee)$.
\end{remark}

\begin{example}
Let $\Delta$ be the simplicial tree

\begin{figure}[H] \centering
 \includegraphics{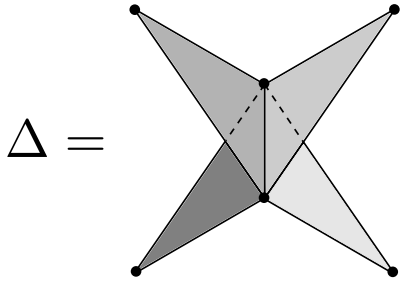}
 \caption{Quasi-tree with many leaf orders}
\end{figure}

Every order on the facets of $\Delta$ is a leaf order, every facet is a leaf, and every facet is the joint of every other facet. This means that if we use the construction given in the proof of Theorem \ref{Quasi forest then resolution supported on a graph tree}, we could produce any tree on four vertices. The minimal generators of $\mathcal{N}(\Delta^\vee)$ are $ x_1x_2x_3, \ x_1x_2x_4, \ x_1x_3x_4, \ x_2x_3x_4$ and the $\lcm$ of any two of these generators is $x_1x_2x_3x_4$, so that each $T_{i,j} = T$ for any tree $T$ we choose to consider. Hence, the $T_{i,j}$ are always connected and we will always get a minimal free resolution of $S/\mathcal{N}(\Delta^\vee)$.
\end{example}

\begin{remark} Fl{\o}ystad~\cite{Floystad} also
constructs specific trees supporting minimal resolutions for the class
of Cohen-Macaulay monomial ideals of projective dimension~1. Let
$I=(m_1,\ldots,m_q)$ be such an ideal and without loss of generality
we assume that $I$ is squarefree (otherwise replace generators with
their polarizations), and that the generators have been arranged so
that each $m_i$ corresponds to the complement of a facet $F_i$ of a
quasi-tree $\D$, and $F_1,\ldots,F_q$ is a leaf ordering of
$\D$. These extra arrangements are in place so that we can compare the
resulting graph with the one in Theorem~\ref{Quasi forest then
resolution supported on a graph tree}.

Consider the complete graph $\K$ on $q$ vertices, and label its
vertices with $m_1,\ldots,m_q$, and label each edge with the $\lcm$ of the
labels of its vertices. Starting at $i=1$, let $\K_i$  be the subgraph
of $\K$ consisting of all vertices and edges whose monomial labels
have total degree $\leq i$, and let $ \U_i$ be a spanning forest of
$\K_i$, with the condition that $\U_1 \subseteq \U_2 \subseteq
\ldots$. Let $d$ be the smallest integer for which $\U_d$ is connected
and contains all the vertices of $\K$. We use the notation $\T_G$ for
the tree $\U_d$. Fl{\o}ystad shows in~\cite{Floystad} that $\T_G$
supports a resolution of $I$.

We now show that a tree $\T_B$ obtained using the algorithm in
Theorem~\ref{Quasi forest then resolution supported on a graph tree}
is an instance of a $\T_G$ as described above. Suppose we have such a
tree $\T_B$, and consider for every $i$ its subgraph $(\T_B)_i$
consisting of edges and vertices whose monomial labels have total
degree $\leq i$. Then $(\T_B)_i$ is a spanning forest of $\K_i$, and
we have the chain of inclusions $(\T_B)_1 \subseteq (\T_B)_2 \subseteq
\ldots$.

Now suppose that the maximum degree of a vertex or edge label in
$\T_B$ is $d$ so that $\T_B= (\T_B)_d$. Then, if you drop the edges
and vertices with label of degree $d$, we have $(\T_B)_{d-1}\subsetneq
(\T_B)_d = \T_B$, which shows that $\T_B$ is an example of a $\T_G$.

\end{remark}

In order to prove a converse statement to Theorem \ref{Quasi forest then resolution supported on a graph tree}, we are going to need a couple of auxiliary results.

\begin{lemma} \label{Generators of Ideal on Induced Subcomplex}
Let $\Delta$ be a simplicial complex on $V = \{ x_1,...,x_n \}$, let $W = \{ x_1,...,x_t \} \subseteq V$, and let $\Delta_W$ be the induced subcomplex of $\Delta$ on $W$. If $m_1,...,m_r$ are the minimal generators of $\mathcal{N}(\Delta^\vee)$, then the generators of $\mathcal{N} \big( (\Delta_W)^\vee \big)$ are a subset of $\{ \gcd (m_1,x_1 \cdots x_t),...,\gcd (m_r,x_1 \cdots x_t) \}$
\end{lemma}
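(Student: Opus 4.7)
The plan is to use Lemma~\ref{Facet Generator Correspondence} twice: once to describe the generators $m_i$ of $\mathcal{N}(\Delta^\vee)$ via facets of $\Delta$, and once to describe the generators of $\mathcal{N}((\Delta_W)^\vee)$ via facets of $\Delta_W$. The whole proof will then reduce to a direct computation showing that the generator corresponding to each facet of $\Delta_W$ is the gcd of some $m_i$ with $x_1 \cdots x_t$.

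First, I would write $m_i = \prod_{x_j \in V \setminus F_i} x_j$ for the facets $F_1,\ldots,F_r$ of $\Delta$, by Lemma~\ref{Facet Generator Correspondence}. Next, I would identify the facets of the induced subcomplex $\Delta_W$: every face of $\Delta_W$ is contained in some $F_i \cap W$, so the facets of $\Delta_W$ are exactly the maximal elements of the collection $\{F_1 \cap W,\ldots, F_r \cap W\}$. In particular, every facet $G$ of $\Delta_W$ has the form $G = F_i \cap W$ for at least one index $i$, although not every $F_i \cap W$ need be a facet of $\Delta_W$ (some may be strictly contained in others), which is exactly what forces the statement to be an inclusion rather than an equality.

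Applying Lemma~\ref{Facet Generator Correspondence} to $\Delta_W$ (viewed as a complex on $W$), the minimal generators of $\mathcal{N}((\Delta_W)^\vee)$ are the monomials $\prod_{x_j \in W \setminus G} x_j$ as $G$ ranges over the facets of $\Delta_W$. For any such $G = F_i \cap W$, I would then compute, using squarefreeness,
\[
\gcd(m_i,\, x_1 \cdots x_t) = \prod_{x_j \in W,\, x_j \notin F_i} x_j = \prod_{x_j \in W \setminus (F_i \cap W)} x_j = \prod_{x_j \in W \setminus G} x_j,
\]
which is precisely the generator of $\mathcal{N}((\Delta_W)^\vee)$ corresponding to $G$. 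This establishes that each generator of $\mathcal{N}((\Delta_W)^\vee)$ appears in the list $\{\gcd(m_i, x_1 \cdots x_t)\}_{i=1}^{r}$, proving the lemma.

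There is no real obstacle here; the only thing to watch is the direction of the inclusion. Because some $F_i \cap W$ may be strictly contained in some $F_j \cap W$, the corresponding $\gcd(m_i, x_1 \cdots x_t)$ will be a nontrivial multiple of $\gcd(m_j, x_1 \cdots x_t)$, so it is a non-minimal element of the generating set and need not appear among the minimal generators of $\mathcal{N}((\Delta_W)^\vee)$. This is exactly why the statement is a containment and not an equality, and phrasing the proof in terms of maximal elements of $\{F_i \cap W\}$ makes this transparent.
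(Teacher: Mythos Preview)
Your proof is correct and follows essentially the same approach as the paper: both apply Lemma~\ref{Facet Generator Correspondence} to $\Delta$ and to $\Delta_W$, identify the facets of $\Delta_W$ as intersections $F_i \cap W$, and then compute directly that the generator corresponding to such a facet equals $\gcd(m_i, x_1\cdots x_t)$. Your additional remark explaining why the inclusion may be strict (non-maximal $F_i \cap W$ yielding redundant generators) is exactly what the paper records separately in the remark following the lemma.
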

Before we begin it is worth noting that restricting to the first $t$ vertices is notationally convenient, but the statement will hold for any subset of $V$ (just make an appropriate relabeling of the vertices). Also, note that if we had considered $\Delta_W$ as a subcomplex of the $n$-simplex, the above statement would not hold.
\begin{proof}
If we present $\Delta$ as $\langle F_1,...,F_r \rangle$ then the generators of $\mathcal{N}(\Delta^\vee)$ have the form $m_i = \displaystyle{\prod_{x_j \in V \setminus F_i}} x_j$. We also know that the facets of $\Delta_W$ are subsets of the facets of $\Delta$, so we can present $\Delta_W$ as $\langle \overbar{F}_{i_1},..., \overbar{F}_{i_s} \rangle$, where $\{ i_1,...,i_s \} \subseteq \{1,...,r \}$ and $\overbar{F}_{i_j} \subseteq F_{i_j}$. Since $\overbar{F}_{i_j} = F_{i_j} \cap W$ we get that
\begin{displaymath}
W \setminus \overbar{F}_{i_j} = W \setminus (F_{i_j} \cap W) = (V \setminus F_{i_j}) \cap W
\end{displaymath}
and the generators of $\mathcal{N} \big( (\Delta_W)^\vee \big)$ are 
\begin{displaymath}
\overbar{m}_{i_j} = \prod_{\substack{x_s \not \in \overbar{F}_{i_j} \\ x_s \in W}} x_s = \prod_{\substack{x_s \in V \setminus F_{i_j} \\ x_s \in W}} x_s = \gcd (m_{i_j}, x_1 \cdots x_t)
\end{displaymath}
so $\overbar{m}_{i_j} \in \{ \gcd (m_1,x_1 \cdots x_t),...,\gcd (m_r,x_1 \cdots x_t) \}$.
\end{proof}
\begin{remark} \label{Redundant Generators}
In the above proof we used the fact that there is a correspondence between the facets of $\Delta_W$ and a subset of the facets of $\Delta$. If $F_q$ is a facet of $\Delta$ where $q \not \in \{i_1,...,i_s \}$ we still have that $F_q \cap W$ is a face of $\Delta_W$. Therefore, $F_q \cap W$ must be a subset of some facet $\overbar{F}_{i_j}$ of $\Delta_W$. With this information we can deduce that
\begin{displaymath}
\gcd (m_q, x_1 \cdots x_t) = \big( \gcd (m_{i_j}, x_1 \cdots x_t) \big) \prod_{\substack{x_s \in F_{i_j} \setminus F_q \\ x_s \in W}} x_s
\end{displaymath}
This tells us that $\gcd (m_q, x_1 \cdots x_t) \in \mathcal{N} \big( (\Delta_W)^\vee \big)$. What this allows us to do is say that
\begin{displaymath}
\mathcal{N} \big( (\Delta_W)^\vee \big) = \big( \gcd (m_1, x_1 \cdots x_t),..., \gcd (m_r, x_1 \cdots x_t) \big)
\end{displaymath}
With this fact we are able to prove the following corollary of Lemma~\ref{Generators of Ideal on Induced Subcomplex}.
\end{remark}
\begin{corollary}
Let $\Delta$ be a simplicial complex on $V = \{x_1,...,x_n\}$. Let $W = \{x_1,...,x_t\}$ for some $t \leq n$ and let $S' = k[x_1,...,x_t]$. Then
\begin{displaymath}
\dfrac{S'}{\mathcal{N} \big( (\Delta_W)^\vee \big)} \cong \dfrac{S}{\mathcal{N}(\Delta^\vee)} \otimes_S \dfrac{S}{( x_{t+1}-1,...,x_n-1)}
\end{displaymath}
\end{corollary}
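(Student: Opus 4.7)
The plan is to simplify the right-hand tensor product and then exhibit an isomorphism via the evaluation map that sends $x_i \mapsto 1$ for $i > t$. Write $J = (x_{t+1}-1, \ldots, x_n-1) \sseq S$. Since tensoring with $S/J$ is just taking a quotient, we have
\[
\dfrac{S}{\mathcal{N}(\Delta^\vee)} \otimes_S \dfrac{S}{J} \ \cong \ \dfrac{S}{\mathcal{N}(\Delta^\vee) + J},
\]
so the goal reduces to constructing a $k$-algebra isomorphism $S'/\mathcal{N}((\Delta_W)^\vee) \cong S/(\mathcal{N}(\Delta^\vee)+J)$.

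Next I would introduce two natural maps: the inclusion $\iota: S' \hookrightarrow S$ and the $k$-algebra ``evaluation'' map $\pi: S \to S'$ given by $\pi(x_j) = x_j$ for $j \leq t$ and $\pi(x_i) = 1$ for $i > t$. A direct check shows that $\pi$ is a surjective $k$-algebra homomorphism with $\ker(\pi) = J$ and $\pi \circ \iota = \mathrm{id}_{S'}$. Composing $\iota$ with the quotient $S \twoheadrightarrow S/(\mathcal{N}(\Delta^\vee)+J)$ then produces a homomorphism $\phi: S' \to S/(\mathcal{N}(\Delta^\vee)+J)$. Surjectivity of $\phi$ is immediate since any $f \in S$ satisfies $f \equiv \pi(f) \pmod{J}$ and $\pi(f) \in S'$.

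The main content of the proof is to compute $\ker(\phi) = S' \cap (\mathcal{N}(\Delta^\vee)+J)$ and identify it with $\mathcal{N}((\Delta_W)^\vee)$. Let $m_1,\ldots,m_r$ be the monomial generators of $\mathcal{N}(\Delta^\vee)$. Each squarefree $m_i$ factors as $\pi(m_i) \cdot u_i$, where $u_i$ is a monomial in $x_{t+1}, \ldots, x_n$; since $u_i \equiv 1 \pmod{J}$, this yields $\pi(m_i) \equiv m_i \pmod{J}$ and hence $\pi(m_i) \in \mathcal{N}(\Delta^\vee)+J$, giving $\pi(\mathcal{N}(\Delta^\vee))S' \sseq \ker(\phi)$. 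Conversely, for any $f \in S' \cap (\mathcal{N}(\Delta^\vee)+J)$, write $f = g + h$ with $g \in \mathcal{N}(\Delta^\vee)$ and $h \in J$; applying $\pi$ and using $\pi(f) = f$ and $\pi(h) = 0$ gives $f = \pi(g) \in \pi(\mathcal{N}(\Delta^\vee))S'$, so the reverse inclusion holds as well.

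Finally, since $\pi(m_i) = \gcd(m_i, x_1 \cdots x_t)$ for each $i$, Remark~\ref{Redundant Generators} identifies $\pi(\mathcal{N}(\Delta^\vee))S'$ with $\mathcal{N}((\Delta_W)^\vee)$, and $\phi$ descends to the desired isomorphism. I do not anticipate a genuine obstacle here: the whole argument is a diagram chase once the evaluation map $\pi$ is in hand, and the only substantive identification of generators has already been carried out in Remark~\ref{Redundant Generators}.
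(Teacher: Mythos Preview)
Your proof is correct and follows essentially the same approach as the paper: both reduce the tensor product to the quotient $S/(\mathcal{N}(\Delta^\vee)+J)$ and then identify this with $S'/\mathcal{N}((\Delta_W)^\vee)$ by setting $x_{t+1},\ldots,x_n$ equal to $1$ and invoking Remark~\ref{Redundant Generators}. Your version is more explicit in constructing the evaluation map $\pi$ and verifying the kernel computation, whereas the paper simply asserts these isomorphisms.
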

\begin{proof}
Let $m_1,...,m_r$ be the minimal generators for $\mathcal{N}(\Delta^\vee)$. Remark \ref{Redundant Generators} tells us that
\begin{displaymath}
\mathcal{N} \big( (\Delta_W)^\vee \big) = \big( \gcd (m_1, x_1 \cdots x_t),..., \gcd (m_r, x_1 \cdots x_t) \big)
\end{displaymath}
which is the same as saying that we can form the generators of $\mathcal{N} \big( (\Delta_W)^\vee \big)$ by taking the the generators of $\mathcal{N}(\Delta^\vee)$ and setting the variables $x_{t+1},...,x_n$ equal to 1. When we are using quotient modules we can do this by adding the desired relations to the ideal by which we are taking the quotient. Specifically, what we mean is
\begin{displaymath}
\dfrac{S'}{\mathcal{N} \big( (\Delta_W)^\vee \big)} \cong \dfrac{S}{\mathcal{N}(\Delta^\vee) + \big( x_{t+1}-1,..., x_n-1 \big)}
\end{displaymath}
Moreover, we have that
\begin{displaymath}
\dfrac{S}{\mathcal{N}(\Delta^\vee) + (x_{t+1}-1,...,x_n-1)} \cong \dfrac{S}{\mathcal{N}(\Delta^\vee)} \otimes_S \dfrac{S}{(x_{t+1}-1,...,x_n-1)}
\end{displaymath}
which is the desired result.

\end{proof}

With these additional results we are now able to provide a new proof the following theorem. 

\begin{theorem}[Herzog, Hibi, Zheng~\cite{Herzog-Dirac}] \label{HHZ}
  Let $\Delta$ be a simplicial complex which is not a simplex, then $\pd(\mathcal{N}(\Delta^\vee)) = 1$ if and only if $\Delta$ is a quasi-forest.
\end{theorem}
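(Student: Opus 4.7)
The plan is to use the tree-supported resolutions built in Theorems~\ref{pd is one iff graph tree} and~\ref{Quasi forest then resolution supported on a graph tree} as the bridge in both directions. For $(\Leftarrow)$, if $\Delta$ is a quasi-forest that is not a simplex, then it has at least two facets, and Theorem~\ref{Quasi forest then resolution supported on a graph tree} supplies a minimal resolution of $S/\mathcal{N}(\Delta^\vee)$ on a tree with at least two vertices and hence an edge; that resolution has length two, so $\pd(\mathcal{N}(\Delta^\vee))=1$.

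For $(\Rightarrow)$, I would induct on the number $r$ of facets of $\Delta$; the base case $r=1$ is just a simplex. Assuming $\pd(\mathcal{N}(\Delta^\vee))=1$, Theorem~\ref{pd is one iff graph tree} produces a minimal resolution supported on a tree $T$ with vertices $v_1,\ldots,v_r$ labelled by the generators $m_i=\prod_{x_l\notin F_i}x_l$ (Lemma~\ref{Facet Generator Correspondence}). Pick a leaf $v_i$ of $T$ with unique neighbour $v_j$. For each $k\neq i$ the induced subcomplex $T_{\lcm(m_i,m_k)}$ is connected by Theorem~\ref{t:main} and contains both $v_i$ and $v_k$, so it must contain $v_j$ as well; hence $m_j\mid\lcm(m_i,m_k)$, which, upon unwinding the definitions of the $m$'s, is exactly the condition $F_i\cap F_k\subseteq F_j$. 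Thus $F_i$ is a leaf of $\Delta$ with joint $F_j$.

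Next I would set $J=(m_k\mid k\neq i)\subseteq S$ and show that $T'=T\setminus\{v_i\}$ supports a minimal resolution of $S/J$: for any monomial $m$, the induced subcomplex $T'_m$ equals either $T_m$ or $T_m$ minus its leaf $v_i$, each of which is connected (or empty), so Theorem~\ref{t:main} together with Proposition~\ref{resolution on graph tree is minimal} gives $\pd_S(J)\leq 1$. Writing $\Delta'=\Delta\setminus\langle F_i\rangle$ on the vertex set $V'=V\setminus\{\text{free vertices of } F_i\}$ and $c=\prod_{v\text{ free in } F_i}v$, a direct computation shows $m_k=c\cdot m'_k$ for each $k\neq i$, where $m'_k$ is the generator of $\mathcal{N}((\Delta')^\vee)$ corresponding to $F_k$, so $J=c\cdot\mathcal{N}((\Delta')^\vee)S$. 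Since multiplication by the nonzerodivisor $c$ and faithfully flat extension from $S''=k[x_l\mid x_l\in V']$ to $S$ both preserve projective dimension, $\pd_{S''}(\mathcal{N}((\Delta')^\vee))\leq 1$. The induction hypothesis then yields a leaf order of $\Delta'$, and appending $F_i$ at the end produces a leaf order of $\Delta$.

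The main obstacle is the bookkeeping in the last step: converting the bound $\pd_S(J)\leq 1$ obtained from the pruned tree $T'$ into the bound $\pd_{S''}(\mathcal{N}((\Delta')^\vee))\leq 1$ in the smaller polynomial ring, so that the inductive hypothesis actually applies to an ideal of the correct form. Everything else is a routine verification that removing a tree leaf preserves the connectivity hypotheses that underlie the Bayer--Peeva--Sturmfels criterion.
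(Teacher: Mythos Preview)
Your argument is correct, and it takes a genuinely different route from the paper's.  The paper does not induct on the number of facets; instead it invokes its Proposition~\ref{prop 2} (the new characterization ``$\Delta$ is a quasi-forest $\iff$ every induced subcomplex $\Delta_W$ has a leaf'') and then, for each $W$, sets the variables outside $W$ equal to $1$ to produce a resolution of $S'/\mathcal{N}((\Delta_W)^\vee)$ of length $\leq 2$ (this is the content of Lemma~\ref{Generators of Ideal on Induced Subcomplex} and its corollary).  The same ``leaf of the tree $\Rightarrow$ leaf of the complex'' translation you use is then applied to $\Delta_W$ rather than to $\Delta$ itself, exhibiting a leaf in every induced subcomplex directly.

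So the paper's proof showcases Proposition~\ref{prop 2} and the restriction lemmas, making the induced-subcomplex criterion do the structural work; your proof bypasses those results entirely and verifies the original definition of quasi-forest by peeling off one leaf at a time.  What you gain is self-containment: you never need the characterization of quasi-forests by induced subcomplexes, nor the description of $\mathcal{N}((\Delta_W)^\vee)$ in terms of $\gcd$'s.  What the paper gains is that no ring-change bookkeeping is needed at the inductive step (the passage from $\pd_S(J)\leq 1$ to $\pd_{S''}(\mathcal{N}((\Delta')^\vee))\leq 1$ via multiplication by $c$ and flat base change), since the specialization $x_j\mapsto 1$ handles everything uniformly.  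Both arguments ultimately pivot on the same observation: a graph-tree leaf in the supporting resolution forces a simplicial leaf via the equivalence $m_j\mid\lcm(m_i,m_k)\iff F_i\cap F_k\subseteq F_j$.
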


\begin{proof}
($\Leftarrow$) Follows from Theorem~\ref{Quasi forest then resolution
    supported on a graph tree}.
  
($\Rightarrow$) Without loss of generality let $W = \{x_1,..., x_k\}$. Recalling Proposition~\ref{prop 2}, it is enough to show that $\Delta_W$ has a leaf to conclude that $\Delta$ is a quasi-forest. Let $\mathbf{F}$ be the minimal free resolution 
\begin{equation*}
\begin{xy}
(-5,0)*+{0}="P0";
(10,0)*+{S^{r-1}}="P1";
(25,0)*+{S^r}="P2";
(40,0)*+{S}="P3";
(55,0)*+{0}="P4";
{\ar "P2"; "P3"}%
{\ar "P0"; "P1"}%
{\ar "P1"; "P2"}%
{\ar "P3"; "P4"}%
\end{xy}
\end{equation*}

of $S/\mathcal{N}(\Delta^\vee)$. The elements $x_{t+1}-1,...,x_n-1$ form an $S/\mathcal{N}(\Delta^\vee)$-sequence, so we can construct the resolution
\begin{displaymath}
\mathbf{F} \otimes_S \dfrac{S}{(x_{t+1}-1,...,x_n-1)}
\end{displaymath}
of $S'/\mathcal{N} \big( (\Delta_W)^\vee \big)$, where $S' = k[x_1,...,x_t]$ (See Chapters 20 and 21 of~\cite{Peeva-GradedSyzygies} for further details). Since the length of the resulting resolution is no greater than the length of $\mathbf{F}$, we find that $\mathrm{pd} (\mathcal{N} \big( (\Delta_W)^\vee \big)) \leq \mathrm{pd} (\mathcal{N}(\Delta^\vee)) = 1$.

If $\mathrm{pd} (\mathcal{N} \big( (\Delta_W)^\vee \big)) = 0$ then it must be the case that $\mathcal{N} \big( (\Delta_W)^\vee \big)$ is principal, in which case $\Delta_W$ is a simplex and therefore has a leaf. If $\mathrm{pd} (\mathcal{N} \big( (\Delta_W)^\vee \big)) = 1$, then Theorem \ref{pd is one iff graph tree} tells us that $\mathcal{N} \big( (\Delta_W)^\vee \big)$ has a minimal resolution supported on a tree $T$. Choose a labeling of the vertices of $T$ for which the $\mathcal{N} \big( (\Delta_W)^\vee \big)$-homogenization yields a resolution, let $\overbar{m}_l$ be the label of one of the free vertices of $T$ and let $\overbar{m}_j$ be the label of the vertex which shares an edge with $m_l$. For any other minimal generator $\overbar{m}_i$ of $\mathcal{N} \big( (\Delta_W)^\vee \big)$ we must have that $\overbar{m}_j \big{|} \lcm (\overbar{m}_l,\overbar{m}_i)$ to ensure connectivity of the induced forest generated by the $lcm$ of $\overbar{m}_l$ and $\overbar{m}_i$. In the proof of Theorem \ref{Quasi forest then resolution supported on a graph tree} we saw that
\begin{displaymath}
\overbar{m}_j \big{|}\lcm(\overbar{m}_l,\overbar{m}_i) \IFF
\overbar{F}_l\cap \overbar{F}_i \subset \overbar{F}_j
\end{displaymath} 
which is exactly the condition needed for $\overbar{F}_l$ to be a leaf of $\Delta_W$ with joint $\overbar{F}_j$. Hence, we can conclude that $\Delta$ is a quasi-forest.
\end{proof}

\begin{example}
  Let $I = (\overbrace{x_1x_3x_6}^{m_1}, \overbrace{x_1x_4x_6}^{m_2},  \overbrace{x_1x_2x_4}^{m_3}, \overbrace{x_4x_5x_6}^{m_4}) \subset k[x_1,...,x_6]$ be the ideal from example \ref{Example of Homogenization}. Using lemma \ref{Facet Generator Correspondence} we get that $\mathcal{N}(I^\vee)$ is the simplicial complex \\

\begin{figure}[H] \centering
 \includegraphics{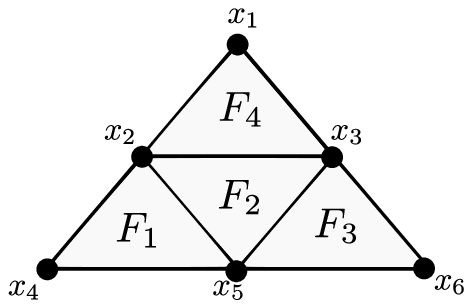}
\end{figure}
\noindent where the facet $F_i$ corresponds to the monomial $m_i$, for $i = 1,...,4$. We see that $\mathcal{N}(I^\vee)$ is a quasi-tree with leaf order $F_1,F_2,F_3,F_4$. By Theorem \ref{Quasi forest then resolution supported on a graph tree} we have that $\pd(\mathcal{N}((\mathcal{N}(I^\vee))^\vee)) = 1$ and remark \ref{Complex-Ideal Correspondence} tells us that $\mathcal{N}((\mathcal{N}(I^\vee))^\vee) = I$, therefore we may use the algorithm presented in Theorem \ref{Quasi forest then resolution supported on a graph tree} to construct a tree which supports the minimal free resolution of $S/I$.

\begin{figure}[H] \centering
 \includegraphics{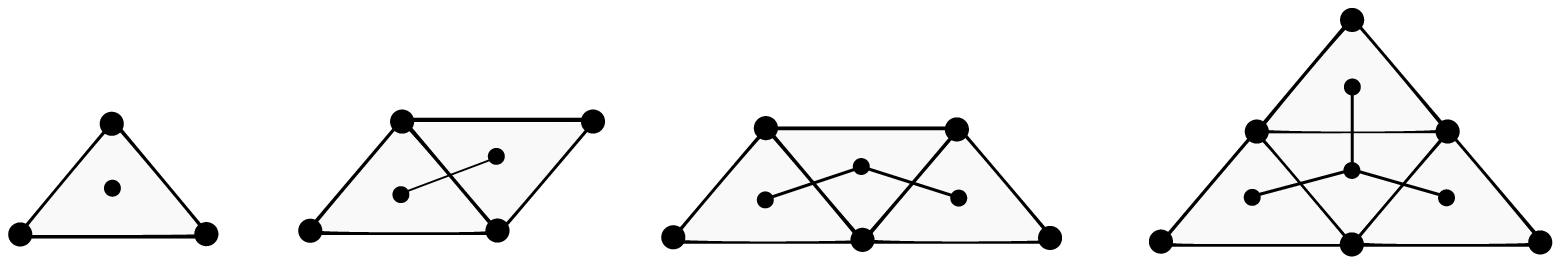}
\end{figure}

\noindent Hence the simplicial complex supporting the minimal free resolution of $S/I$ is given by the labeled tree

\begin{figure}[H] \centering
 \includegraphics{Figure_3.eps}
\end{figure}
\end{example} 
By using the correspondence given by $\mathcal{N}((-)^\vee)$ we are able to identify precisely when a squarefree monomial ideal $I$ is such that $\pd(I) \leq 1$, and when this is the case we have also provided an algorithm for constructing the minimal free resolution of $S/I$ from the simplicial complex $\mathcal{N}(I^\vee)$. The amalgamation of these results and techniques is the following theorem, in which classify monomial ideals with projective dimension $\leq 1$.

\begin{theorem}\label{Cor to HHZ} Let $I$ be a squarefree monomial ideal in a polynomial ring $S$. Then the following are equivalent.
\begin{enumerate}  
\item $\pd_S(I) \leq 1$
\item $\mathcal{N}(I^\vee)$ is a quasi-forest
\item $S/I$ has a minimal free resolution supported on a graph-tree.
\end{enumerate}
\end{theorem}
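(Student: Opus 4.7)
The plan is to recognize that this theorem is essentially a packaging of the results already established in the paper, so the proof amounts to citing the right pieces and handling one trivial edge case. First, the equivalence of (1) and (3) is exactly the content of Theorem~\ref{pd is one iff graph tree} (with Proposition~\ref{resolution on graph tree is minimal} ensuring that any resolution supported on a tree is automatically minimal). This equivalence does not even require squarefreeness, so it applies here without modification.

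Next, I would establish the equivalence of (1) and (2) by reducing to Theorem~\ref{HHZ}. The key observation is that Remark~\ref{Complex-Ideal Correspondence} tells us that the assignment $\Delta \mapsto \mathcal{N}(\Delta^\vee)$ is a bijection between simplicial complexes on $\{x_1,\ldots,x_n\}$ and squarefree monomial ideals in $S$, with inverse $I \mapsto \mathcal{N}(I^\vee)$. So, setting $\Delta := \mathcal{N}(I^\vee)$, we get $I = \mathcal{N}(\Delta^\vee)$, and Theorem~\ref{HHZ} gives that $\pd_S(I) = 1$ if and only if $\Delta$ is a quasi-forest, provided $\Delta$ is not a simplex.

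The remaining step is to dispose of the simplex case. If $\Delta = \mathcal{N}(I^\vee)$ is a simplex, then by Lemma~\ref{Facet Generator Correspondence}, $I$ has a single minimal generator, hence is principal, so $\pd_S(I) = 0 \leq 1$. Simultaneously, a simplex is trivially a quasi-forest (its unique facet is vacuously a leaf, so the singleton sequence is a leaf order), so (2) holds as well. Combining the non-simplex case handled by Theorem~\ref{HHZ} with this principal-ideal case yields (1)~$\Leftrightarrow$~(2) in full generality.

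There is no genuine obstacle here: all the substantive work has been done in Theorems~\ref{pd is one iff graph tree}, \ref{Quasi forest then resolution supported on a graph tree}, and \ref{HHZ}. The only care needed is to verify that the bijection $I \leftrightarrow \mathcal{N}(I^\vee)$ used in converting between the ideal-side and complex-side statements is the correct one (which is just Remark~\ref{Complex-Ideal Correspondence}), and to remember the trivial $\pd = 0$ case which Theorem~\ref{HHZ} explicitly excluded by its ``not a simplex'' hypothesis.
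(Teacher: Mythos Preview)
Your proposal is correct and matches the paper's own treatment: the paper presents Theorem~\ref{Cor to HHZ} without a separate proof, describing it as ``the amalgamation of these results and techniques,'' i.e., exactly the packaging of Theorems~\ref{pd is one iff graph tree} and~\ref{HHZ} (together with Proposition~\ref{resolution on graph tree is minimal} and Remark~\ref{Complex-Ideal Correspondence}) that you outline. One tiny point worth making explicit: in the non-simplex case you invoke Theorem~\ref{HHZ}, which characterizes $\pd_S(I)=1$ rather than $\pd_S(I)\leq 1$; this is harmless because if $\Delta=\mathcal{N}(I^\vee)$ is not a simplex then $I$ has at least two minimal generators and hence $\pd_S(I)\geq 1$, so the two conditions coincide there.
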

%


\begin{thebibliography}{1234}

\bibitem{Bayer-MonRes} David Bayer, Irena Peeva and Bernd Sturmfels,
  \emph{Monomial resolutions}, Math. Res. Lett., 5(1-2):31--46 (1998)

\bibitem{Eisenbud1995} David Eisenbud,  \emph{Commutative algebra, with a view toward algebraic geometry},  Graduate Texts in Mathematics, vol. 150, Springer-Verlag, New York, (1995)

\bibitem{Ene-GrobComAlg} Viviana Ene and  J{\"u}rgen Herzog, 
\emph{Gr\"obner bases in commutative algebra},
Graduate Studies in Mathematics, vol. 130,  American Mathematical Society, Providence, RI, (2012)

\bibitem{Faridi-FacetIdeal} Sara Faridi, \emph{The facet ideal of a
  simplicial complex}, Manuscripta Mathematica 109(2):159--174 (2002)
  
\bibitem{Faridi-CohenMacaulay}
  Sara Faridi,  \emph{Simplicial trees are sequentially Cohen-Macaulay},
  J. Pure Appl. Alg., 190(1-3):121-136 (2004)

\bibitem{Faridi-MonRes} Sara Faridi, \emph{Monomial resolutions
  supported by simplicial trees}, J. Commut. Algebra, 6(3):347--361
  (2014)
  
\bibitem{Floystad} Gunnar Fl{\o}ystad, \emph{Cellular Resolutions of
  Cohen-Macaulay Monomial Ideals}, J. Commutat. Algebra,
  1(1):57--89 (2009)

\bibitem{Herzog-Dirac} J{\"u}rgen Herzog, Takayuki Hibi, and
  Xinxian Zheng, \emph{Dirac's theorem on chordal graphs and
    {A}lexander duality}, European J. Combin. 25(7):949--960 (2004)

\bibitem{Peeva-GradedSyzygies} Irena Peeva, \emph{Graded syzygies},
  Algebra and Applications, vol. 14, Springer-Verlag London, Ltd.,
  London, (2011)


\bibitem{Peeva-FramesDegen} Irena Peeva and Mauricio Velasco, \emph{Frames and degenerations of monomial resolutions},  Trans. Amer. Math. Soc.,
363(4):2029--2046 (2011)


\bibitem{Phan-MinMon} Jeffry Phan, \emph{Minimal monomial ideals and linear resolutions}, arXiv:math/0511032 (2005)


\bibitem{Reiner-LinSyz} Victor Reiner and Volkmar Welker,  \emph{Linear
  syzygies of Stanley-Reisner ideals},  Math. Scand., 89(1):117--132
  (2001)

\bibitem{Taylor-PhD} Diane Taylor,  \emph{Ideals generated by monomials in an R-sequence} PhD thesis, University of Chicago (1966)

\bibitem{Velasco-MinResNotCW} Mauricio Velasco, \emph{Minimal free
  resolutions that are not supported by a CW-complex}, J. Algebra,
  319(1):102--114 (2008)

\bibitem{Zheng-ResFac} Xinxian Zheng, \emph{Resolutions of facet
  ideals}, Comm. Algebra, 32(6):2301--2324 (2004)


\end{thebibliography}

\end{document}